\newtheorem{theorem}{Theorem}[section]
\newtheorem{lemma}[theorem]{Lemma}
\newtheorem{corollary}[theorem]{Corollary}
\newtheorem{problem}[theorem]{Problem}
\newtheorem{example}[theorem]{Example}
\newtheorem*{theorem*}{Theorem}
\begin{document}
\title{\textbf{On the enumeration of connected sets in finite cylindrical lattice graphs}}
\author[a,b]{Hongxia Ma}
\author[a]{Xian'an Jin}
\author[a]{Meiqiao Zhang\thanks{Corresponding Author.\\\indent \hspace{0.18cm} E-mails:  hongxiama516@163.com, xajin@xmu.edu.cn, meiqiaozhang95@163.com.}}
	\affil[a]{\small School of Mathematical Sciences, Xiamen University, P. R. China}
	\affil[b]{\small School of Mathematical Sciences, Xinjiang Normal University, P. R. China}
\date{}
\maketitle

\begin{abstract}
A connected set in a graph is a non-empty set of vertices that induces a connected subgraph. In an infinite lattice, a connected set is often referred to as a lattice animal, whose enumeration up to isomorphism is a classical problem in both combinatorics and statistical physics. In this paper, we focus on the enumeration of connected sets in finite lattice graphs, providing a link between combinatorial counting and structural connectivity in the system.

For any positive integers $m,n$, let $N(P_m\times P_n)$ and $N(C_m\times P_n)$ denote the number of all connected sets in the $(m\times n)$-lattice graph $P_m\times P_n$ and $(m\times n)$-cylindrical lattice graph $C_m\times P_n $, respectively. In 2020, Vince derived enumeration formulas for $N(P_m\times P_2)$ and $N(C_m\times P_2)$, and highlighted the increasing difficulty of extending these calculation results to larger (cylindrical) lattice graphs. Recently, the authors of this paper have developed a method based on multi-step recurrence formulas to obtain the enumeration formula for $N(P_m\times P_n)$ with $m\le 4$. In this article, we apply a similar approach to derive the enumeration formula for $N(C_m\times P_n)$ with $m\le 7$. Further, for the general case, we establish an explicit and tight lower bound on the number of connected sets in the Cartesian product graph $G\times P_n $ for any connected graph $G$, by employing the transfer matrix method on a subclass of connected sets. Based on this, we perform an asymptotic analysis on several lattice graphs and show that $O(N(P_3\times P_n))=1.6694^{3n}$, $O(N(C_4\times P_n))=1.8014^{4n}$, and $O(N(C_5\times P_n))=1.7877^{5n}$.
\vspace{0.2cm} \\
\noindent \textbf{Keywords:} lattice animal; lattice; connected set;  recurrence formula.\\
\end{abstract}

\section{Introduction}
\noindent
The enumeration of local structures in lattices has long been a topic of interest in both combinatorics and statistical physics. Classical examples include the dimer problem, which concerns the counting of perfect matchings~\cite{Lai,Yan}, as well as the enumeration of independent sets~\cite{SO,Calkin}, spanning forests and acyclic orientations~\cite{SC20201,Calkin2003}.
In particular, the enumeration of \textit{lattice animals} (finite sets of connected lattice sites or bonds) arises naturally in  studies of percolation, branching processes, and polymer networks, as these animals model connected structures such as clusters or polymer configurations. Most research in this area concentrates on counting distinct animals of a given size in an infinite lattice up to translation and other symmetries, and on analyzing the asymptotic behavior of these counts as the size of the animals tends to infinity~\cite{Luther,Jensen2000,Jensen2001}.
While these studies provide valuable insights into large-scale combinatorial behavior, many mysteries about lattices remain.

In this paper, we consider a related but complementary problem. Rather than focusing on an infinite lattice and distinct animals, we study the number of all animals within a finite lattice region, which reflects the connectivity of the finite lattice regarding its size and geometry. This research is also motivated by and originates from relevant studies on the enumeration of spanning trees in finite lattices~\cite{Daoud,Xiao}. From a statistical mechanics perspective, enumerating all animals in a finite region is similar to listing all microstates of a discrete system, with each animal corresponding to a distinct microconfiguration. In this way, our study connects combinatorial enumeration with finite-system statistical mechanics, linking structural counts directly to system connectivity and size effects. Furthermore, this approach may offer insights relevant to percolation and other lattice-based models in statistical physics.

For the remainder of this paper, we consider finite graphs only.
For any graph $G=(V(G),E(G))$, a \textit{connected set} $C$ of $G$ is a non-empty subset of $V(G)$ such that the subgraph of $G$ induced by $C$, denoted by $G[C]$, is connected. Clearly, when $G$ is a lattice, any connected set of $G$ is an animal and vice versa.
Let $\mathcal C(G)$ be the set of all connected sets of $G$, let $N(G)=|\mathcal C(G)|$, and let $c(G)=\sqrt[|V(G)|]{N(G)}$. 
Then $N(G)$ is the number of all connected sets in $G$, and $c(G)$ is a parameter characterizing the asymptotic behavior of $N(G)$, where $1<c(G)<2$ as shown in~\cite{Cambie,Haslegrave, KANG2018}.
 \par

In the literature, research on counting connected sets has been extensive and approached from various perspectives. For instance,
Kangas et al. \cite{KANG2018} determined upper bounds of $N(G)$ for graphs $G$ with the maximum degree at most 5; Luo et al. \cite{Luo} established sharp upper and lower bounds for $N(G)$ in terms of graph parameters  such as the chromatic number, stability number, matching number and connectivity; Cambie et al. \cite{Cambie} recently studied the maximum number of connected sets among all $d$-regular graphs for small $d$; Vince \cite{Vince22}
 conducted studies on several parameters derived from $N(G)$ and proved that \( P_n \) minimizes the average order of connected sets over all connected graphs of order $n$; Haslegrave \cite{Haslegrave} proved that the average connected set density is bounded away from 1 for graphs without vertices of degree 2; and Vince
 \cite{Vince23} showed that if \( G \) is a connected vertex transitive graph, then the average order of the connected sets of \( G \) is at least \( |V(G)|/2 \).

 Back to the problem we are concerned with, let $P_{n}$ and $C_{n}$ be the path and cycle of order $n$ for any positive integer $n$, respectively.
The \textit{Cartesian product} of any pair of graphs $G$ and $H$ is denoted by the graph $G\times H$, where $V(G\times H)=V(G)\times V(H)$ and $(x_{1},y_{1})(x_{2},y_{2})\in E(G\times H)$ if and only if either $x_{1}x_{2} \in E(G)$ and $y_{1}=y_{2}$, or $y_{1}y_{2} \in E(H)$ and $x_{1}=x_{2}$.
Then $P_{m}\times P_{n}$ and $C_{m}\times P_{n}$ are lattice graphs, and are known as the \textit{$(m\times n)$-lattice graph} and the \textit{$(m\times n)$-cylindrical lattice graph}, respectively.
See two examples in Figure~\ref{f1}.
\begin{figure}[h]
\centering
\scalebox{0.35}[0.35]{\includegraphics{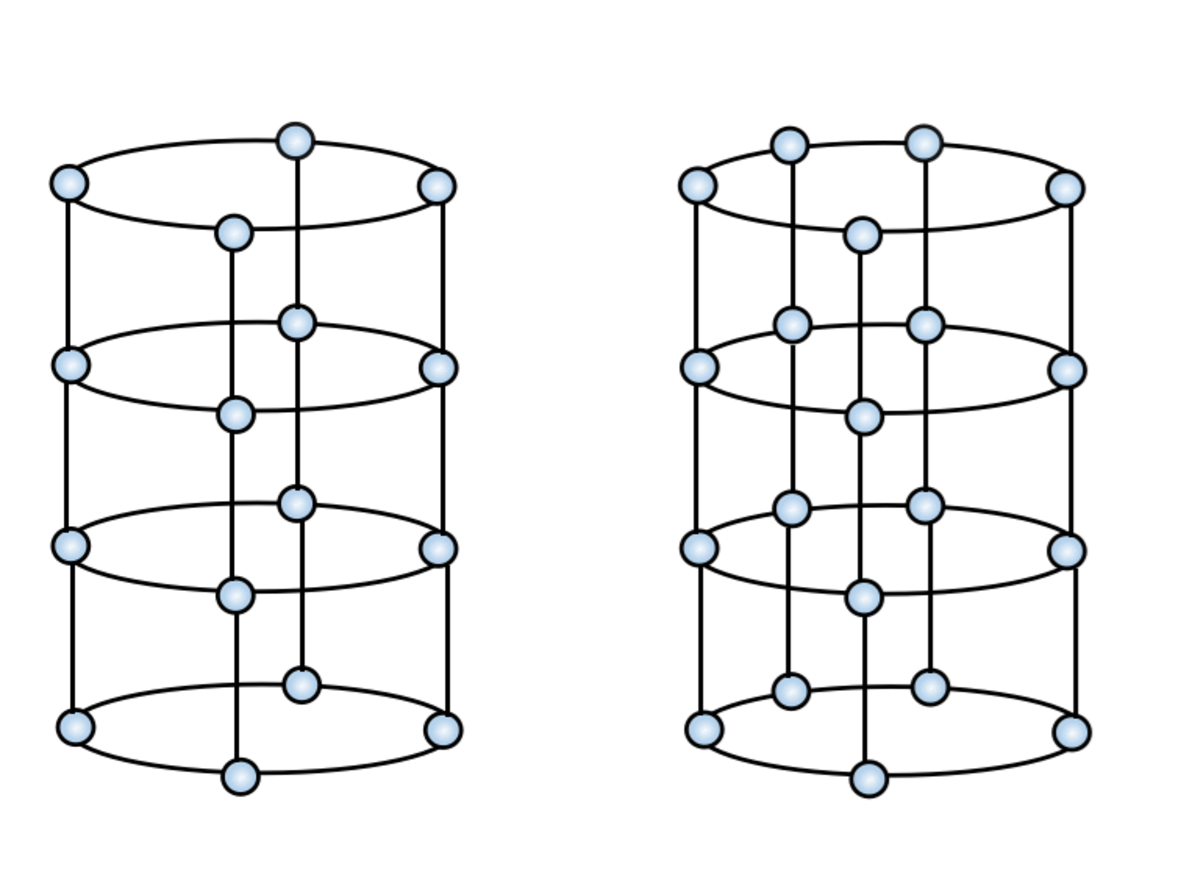}}
\caption{$C_4\times P_4$ and $C_5\times P_4$}
\label{f1}
\end{figure}
In 2020, Vince \cite{Vince2020} derived the following explicit enumeration formulas for connected sets in $P_n\times P_2$ and $C_n\times P_2 :$
$$N(P_{n}\times P_{2})=\frac{\beta(n+3)-4n-7}{2}~~\text{and}~~N(C_{n}\times P_{2})= 1-3n+2\beta(n)+3n\overline{\beta}(n),$$ where $\overline{\beta}(n)$ and $\beta(n)$ are Pell number and Pell-Lucas numbers.
Beyond these results, he posed the problem of finding a formula for general $N(P_n\times P_n)$ with noting the difficulty of solving this problem in \cite{Vince2020}, since determining the number of connected sets in an arbitrary graph is \#P-complete \cite{SUT}.

\begin{problem}[\cite{Vince2020}]\label{prob}
Find a formula for the number of connected sets in the
$(n\times n)$-lattice graph.
\end{problem}

As a preliminary study to Problem~\ref{prob},
the authors of this paper have developed a recursive method in \cite{Ma} to determine the numbers of connected sets in $(3\times n)$- and $(4\times n)$-lattice graphs, accompanied by the explicit enumeration formula for $N(K_{m}\times P_{n})$ (see Theorem~\ref{thm11}) by employing the transfer matrix method, where $K_m$ is the complete graph of order $m$.

In this paper, we continue the study initiated in~\cite{Ma,Vince2020} by further investigating the enumeration problem on connected sets in (cylindrical) lattice graphs. We first apply an approach in Section~\ref{secmain1} similar to that in~\cite{Ma} to derive the enumeration formula for $N(C_{m}\times P_{n})$ with $m\le 7$ (in Theorem~\ref{main4}), and provide detailed calculation results for the cases $m=4,5$ as examples in Section~\ref{seccal} (see Corollaries~\ref{main1} and~\ref{main2}). Since these enumeration formulas are not in closed form, we further focus on establishing explicit lower bounds of $N(G\times P_n)$ for any graph $G$ (see Theorem~\ref{main3} below) by the transfer matrix method.

For any connected graph $G$, we call a set in $G\times P_n$ consisting of all the vertices with the same second coordinate a \textit{column}. Then any column induces a copy of $G$ in $G\times P_n$, and two columns $X,Y$ are said to be \textit{consecutive} if there exist $(t,u)\in X$ and $(t,v)\in Y$ such that $uv\in E(P_n)$. 
In the following, we shall focus on a special class of connected sets $\mathcal{C}_L(G\times P_n)$, which is defined as
$\{C \in \mathcal C(G\times P_n):
C \cap (X\cup Y)\in \mathcal{C}(G\times P_n)
 \text{ for any two consecutive columns } X,Y~\text{of}~G\times P_n~\text{such that}~
X\cap C, Y\cap C\neq\emptyset\}$. Such sets are more densely connected, as illustrated in Figure~\ref{f12}.

\begin{figure}[h]
\centering
\scalebox{0.65}[0.65]{\includegraphics{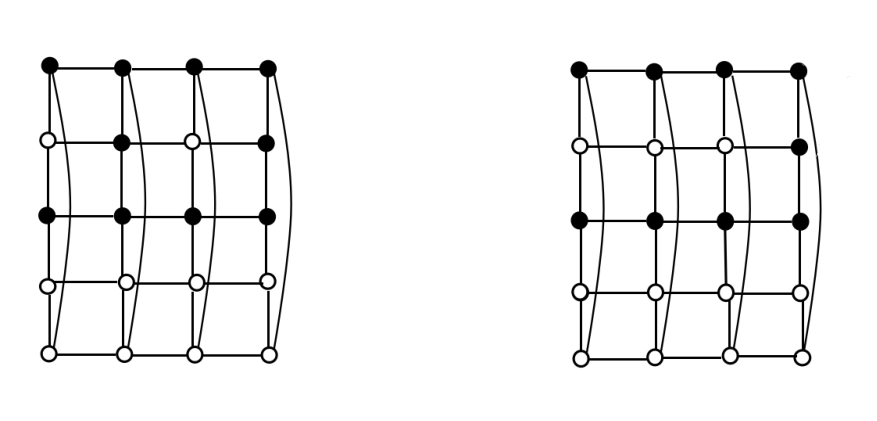}}\\
\hspace{-0.5cm} (a) $C\in \mathcal{C}_L(C_5\times P_4)$ \hspace{2.2cm} (b) $C\notin \mathcal{C}_L(C_5\times P_4)$
\caption{Examples for sets $C$ in $\mathcal{C}(C_5\times P_4)$, where $C$ is the set of solid vertices}
\label{f12}
\end{figure}

Now let $N_L(G\times P_n)=|\mathcal{C}_L(G\times P_n)|$.
Then $N_L(G\times P_n)\le N(G\times P_n)$ as $\mathcal{C}_L(G\times P_n)\subseteq \mathcal{C}(G\times P_n)$.
Moreover, $N_L(G\times P_n)$ is a tight lower bound of $N(G\times P_n)$ and its explicit enumeration formula is as follows.

 \begin{theorem}\label{main3}
Let $G$ be a connected graph of order $m$. Then
 \begin{equation}\label{equ1.2}
 	N(G\times P_n)\ge N_{L}(G\times P_n)=n\cdot N(G)+\sum\limits_{k=2}^{n}(n-k+1)\cdot(\mathbf{1}^{T}\cdot A^{k-1}\mathbf{1} ),
 \end{equation}
where $A=(a_{ij})_{i,j\in \{1,2,...,2^{m}-1\}}$, $ \phi$
 is a fixed bijection from $\{1,2,...,2^{m}-1\}$ to $2^{V(G)}\setminus \{\emptyset\}$, $uv\in E(P_n)$, and
\[
a_{ij} =
\begin{cases}
1, & \text{if}~\{(p,u): p\in \phi(i)\}\cup \{(q,v): q\in \phi(j)\}\in\mathcal{C}(G\times P_n), \\
 0, & \text{otherwise}.
\end{cases}
\]
Moreover, the equality in (\ref{equ1.2}) holds only when $G$ is a complete graph or $n\in\{1,2\}$.
\end{theorem}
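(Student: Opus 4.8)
The inequality $N(G\times P_n)\ge N_L(G\times P_n)$ is immediate from $\mathcal{C}_L(G\times P_n)\subseteq \mathcal{C}(G\times P_n)$, so the substance lies in the exact formula for $N_L$ and in the equality analysis. The plan is to decompose $\mathcal{C}_L(G\times P_n)$ according to the block of columns that a set occupies, and then to count each block-size class by a transfer matrix.

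First I would record the structural fact that drives everything. Since the second coordinates are ordered along $P_n$ and the only edges between distinct columns join consecutive columns at equal $G$-coordinates, every connected set $C$ meets a contiguous block of columns $\{a,a+1,\dots,b\}$; moreover, for each $a\le i<b$ its restrictions $S_i,S_{i+1}$ to columns $i,i+1$ satisfy $S_i\cap S_{i+1}\neq\emptyset$, because cutting between the two columns leaves only the horizontal edges at coordinates in $S_i\cap S_{i+1}$. I would then prove the key equivalence: a set $C$ occupying columns $[a,b]$ lies in $\mathcal{C}_L$ if and only if every two-column slice $\{(p,i):p\in S_i\}\cup\{(q,i+1):q\in S_{i+1}\}$ with $a\le i<b$ is connected. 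The forward direction is the definition of $\mathcal{C}_L$; for the reverse I would show that connectivity of $C$ itself is automatic, since $C$ is the union of these slices and consecutive slices overlap in the nonempty set $S_{i+1}$ placed in column $i+1$, so a union-of-overlapping-connected-subgraphs argument yields connectivity. This turns membership in $\mathcal{C}_L$ into a purely local, column-by-column condition.

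With this reduction the counting becomes a transfer-matrix computation. Sets occupying a single column are exactly connected subsets of one copy of $G$, contributing $n\cdot N(G)$. For a block of $k\ge 2$ columns, the admissible sets correspond bijectively to sequences $(S_a,\dots,S_b)$ of nonempty subsets of $V(G)$ whose consecutive pairs index connected slices, i.e.\ whose consecutive indices are linked in $A$; there are $n-k+1$ positions for such a block and, by the walk interpretation of matrix powers, exactly $\mathbf{1}^{T}A^{k-1}\mathbf{1}$ sequences of length $k$. Summing over $k$ gives the stated formula. I would double-check here that $\phi$ ranging over nonempty subsets is precisely what forces each $S_i\neq\emptyset$, and that the $k=1$ term must be handled separately, since it equals $N(G)$ rather than $\mathbf{1}^{T}A^{0}\mathbf{1}=2^m-1$ (a one-column set must itself be connected in $G$).

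For the equality clause I would argue both implications. When $n\in\{1,2\}$ the two-column condition is vacuous or coincides with connectivity of $C$, so $\mathcal{C}_L=\mathcal{C}$. When $G=K_m$, every induced slice is connected as soon as $S_i\cap S_{i+1}\neq\emptyset$ (each column part is a clique and the horizontal edges join the two cliques), and that intersection is forced by the cut fact above; hence again $\mathcal{C}_L=\mathcal{C}$. Conversely, if $G$ is not complete and $n\ge 3$, I would exhibit a connected set outside $\mathcal{C}_L$: choosing nonadjacent $x,y\in V(G)$, take $V(G)$ in column $1$ and $\{x,y\}$ in columns $2$ and $3$. This set is connected, because column $1$ is connected and the horizontal edges at $x,y$ pull in columns $2$ and $3$, yet its slice on columns $2,3$ splits into the component at $x$ and the component at $y$, so it is not in $\mathcal{C}_L$ and the inequality is strict. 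The main obstacle is the key equivalence of the second paragraph: verifying that local slice-connectivity is genuinely equivalent to membership in $\mathcal{C}_L$, and in particular that global connectivity comes for free, is exactly what makes the transfer-matrix count an exact identity rather than merely a bound.
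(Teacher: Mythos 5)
Your proposal is correct and follows essentially the same route as the paper: both count $\mathcal{C}_L(G\times P_n)$ by decomposing into contiguous column blocks and identifying the block counts with entries of the powers $A^{k-1}$ of the transfer matrix, and both prove strictness for non-complete $G$ and $n\ge 3$ by exhibiting an explicit three-column connected set whose two-column slice at a pair of non-adjacent vertices is disconnected. If anything, your ``key equivalence'' (local slice-connectivity plus the overlapping-union argument implies global connectivity) makes explicit a step that the paper's induction on the entries of $A^{k-1}$ leaves implicit, and your counterexample (all of $V(G)$ in column $1$, $\{x,y\}$ in columns $2$ and $3$) is a harmless variant of the paper's path-based construction.
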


The proof of Theorem~\ref{main3} will be given in Section~\ref{secmain2}.

By Theorem~\ref{main3}, we further provide in Section~\ref{secmain2} an asymptotic analysis on the behavior of $N(P_3\times P_n)$, $N(C_4\times P_n)$, $N(C_5\times P_n)$ and $N(K_{1,3}\times P_n)$ 
as $n$ tends to infinity, where $K_{1,3}$ is the star graph of order four. Note that a similar analysis can also be done for other Cartesian product graphs $G\times P_n$, although the calculations become more time-consuming as $|V(G)|$ increases.


\section{The enumeration formula for $N(C_{m}\times P_{n})$ with $m\le7$
\label{secmain1}}
In this section, we shall determine the enumeration formula for $N(C_{m}\times P_{n})$ with $m\le7$.


For $G=P_m\times P_n$ or $C_m\times P_n$, it is clear to see that $\mathcal C(G)\subseteq \mathcal C(K_{m}\times P_{n} )$ as $G$ is a subgraph of $K_{m}\times P_{n}$. Since to determine the enumeration formula for $N(K_{m}\times P_{n} )$ is rather simple (see Theorem~\ref{thm11}), a natural approach here is to identify $\mathcal C(G)$ by removing some elements from $\mathcal C(K_{m}\times P_{n} )$, in which any such element must contain a pair of vertices that are adjacent in $K_{m}\times P_{n}$ but not adjacent in $G$ (the reverse may not be true).
This idea has been applied to derive the enumeration formula for $N(P_m\times P_n)$ with $m\le 4$ in \cite{Ma}. In the following, we shall extend this approach to
obtain similar results for $N(C_m\times P_n)$ with $m\le 7$ based on the background graph $K_{m}\times P_{n}$.\par

 We first fix a labeling of $V(K_{m}\times P_{n})$ for all $m,n$, which naturally induces a labeling of $V(C_{m}\times P_{n})$ and will be used throughout Sections~\ref{secmain1} and~\ref{seccal}.
  Assume that $V(K_{m}\times P_{n})=\{ v_{i,j}: i\in \{1,2,...,m\}, j \in \{1,2,...,n\}\}$, where for any $i_1,i_2\in \{1,2,...,m\}$ and $j_1,j_2\in \{1,2,...,n\}$, $v_{i_1,j_1}v_{i_2,j_1}\in E(K_{m}\times P_{n})$, and $v_{i_1,j_1}v_{i_1,j_2}\in E(K_{m}\times P_{n})$ if and only if $|j_1-j_2|=1$. See an example in Figure~\ref{fig2}. Then for each $j=1,2,\dots, n$, $I_j=\{v_{i,j}:i \in \{1,2,...,m\}\}$ is a column, and we call it the \textit{$j$-th column}. For any $S_{j}\subseteq I_{j}$ with $j\ge 2$,
let $S_{j-1}$ be the set of vertices in $I_{j-1}$ that are adjacent to the vertices in $S_{j}$. Then $|S_{j}|=|S_{j-1}|$ clearly holds.
\par
\begin{figure}[h]
\centering
\scalebox{0.45}[0.45]{\includegraphics{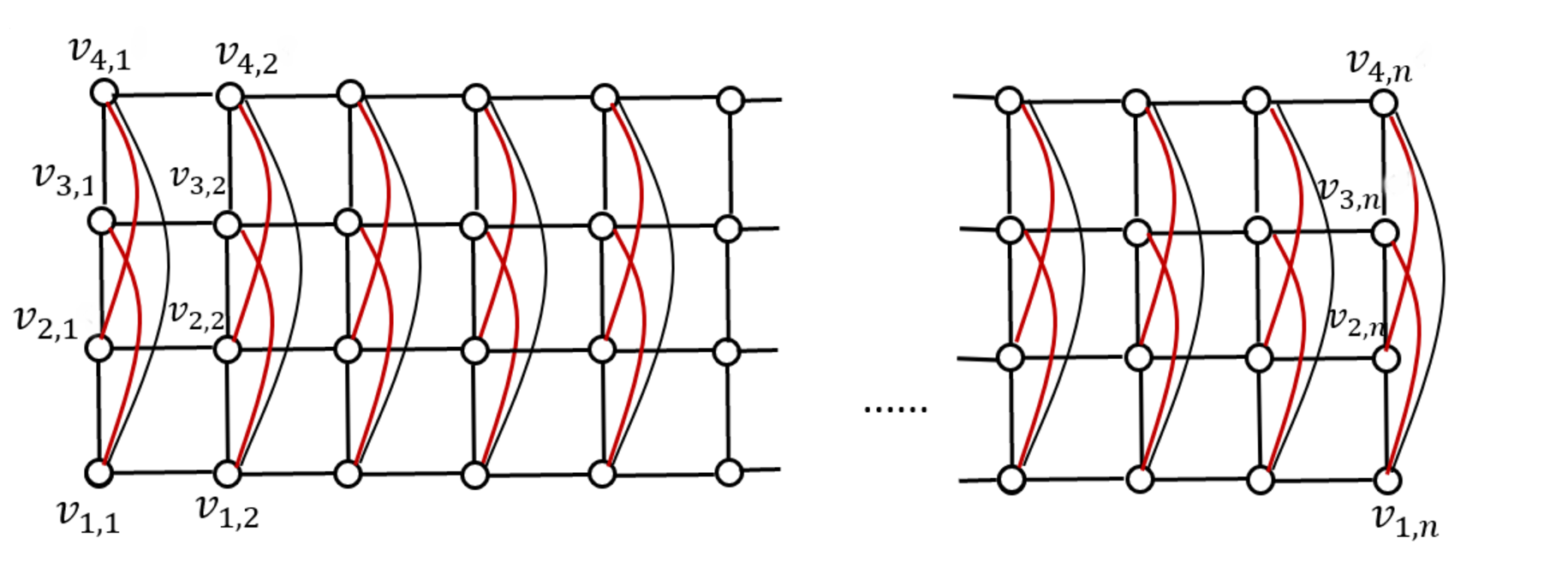}}
\caption{The graph $K_{4}\times P_{n}$}
\label{fig2}
\end{figure}

Let \( \mathcal{C}_{m,n} \) denote the subset of \(\mathcal{C}( K_{m} \times P_{n}) \) that contain at least one vertex from each column. Then it is easy to see that $N(K_{m}\times P_{n})=\sum\limits_{k=1}^{n}(n-k+1) \cdot |\mathcal{C}_{m,k}|.$
For any $i=1,2,\dots,m$, let $f_{m,n}^{i}$ denote the number of sets in \( \mathcal{C}_{m,n} \) that contain a fixed
set of $i$ vertices in the $n$-th column. Then as illustrated in Theorem 1.2 of~\cite{Ma}, $|\mathcal{C}_{m,n}|=\sum \limits_{i=1}^{m}\binom{m} {i}f_{m,n}^{i}$, and $f_{m,n}^{i}$ can be derived by the recurrence formula that
\begin{equation}\label{eqfmn}
f_{m,n}^{i}= |\mathcal{C}_{m,n-1}|-\sum \limits_{j=1}^{m-i}\binom{m-i} {j}f_{m,n-1}^{j},
\end{equation}
with
 $f_{m,1}^{i}=1$ for all $i=1,2,\dots,m$.
 Hence, we present the counting formula for $N(K_{m}\times P_{n})$.
 \begin{theorem}[\cite{Ma}]\label{thm11}
For any positive integers $m$ and $n$,
$$N(K_{m}\times P_{n})=\sum\limits_{k=1}^{n}(n-k+1)\cdot
|\mathcal{C}_{m,k}|,$$
where $|\mathcal{C}_{m,k}|= \begin{bmatrix}\binom{m} {1} & \binom{m} {2} & \cdots & \binom{m} {m}\end{bmatrix} \cdot T^{k-1}\cdot \mathbf{1}$, $T=(t_{ij})_{i,j\in \{1,2,...,m\}}$, and
\[
t_{ij} =
\begin{cases}
\binom{m} {j}-\binom{m-i} {j}, & \text{when } j\leq m-i, \\
 \binom{m} {j}, & \text{when } j\geq m-i+1.
\end{cases}
\]
\end{theorem}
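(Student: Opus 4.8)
The plan is to take as given the two structural identities already recorded in the excerpt --- the column-interval decomposition $N(K_m\times P_n)=\sum_{k=1}^{n}(n-k+1)\,|\mathcal C_{m,k}|$ and the last-column partition $|\mathcal C_{m,n}|=\sum_{i=1}^{m}\binom{m}{i}f^{\,i}_{m,n}$ --- and to reduce the whole statement to evaluating $|\mathcal C_{m,k}|$ in closed form. Since the quantities $f^{\,i}_{m,n}$ obey the linear recurrence (\ref{eqfmn}), the heart of the argument is simply to recognise that recurrence as a single matrix iteration and then read $|\mathcal C_{m,k}|$ off as a bilinear form built from a power of the transfer matrix $T$.

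Concretely, I would collect the quantities into a column vector $\mathbf f_n=(f^{1}_{m,n},\dots,f^{m}_{m,n})^{T}$ and write $\mathbf b=\begin{bmatrix}\binom{m}{1}&\cdots&\binom{m}{m}\end{bmatrix}$, so that the partition identity becomes $|\mathcal C_{m,n}|=\mathbf b\,\mathbf f_n$. The key algebraic step is to substitute this identity, applied at index $n-1$, into (\ref{eqfmn}) in order to eliminate $|\mathcal C_{m,n-1}|$:
\[
f^{\,i}_{m,n}=\sum_{j=1}^{m}\binom{m}{j}f^{\,j}_{m,n-1}-\sum_{j=1}^{m-i}\binom{m-i}{j}f^{\,j}_{m,n-1}
=\sum_{j=1}^{m}\Big(\binom{m}{j}-\binom{m-i}{j}\Big)f^{\,j}_{m,n-1},
\]
where in the last sum I use the convention $\binom{m-i}{j}=0$ whenever $j>m-i$ (and note that for $i=m$ the subtracted sum is empty). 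Matching the coefficient of $f^{\,j}_{m,n-1}$ against the two cases $j\le m-i$ and $j\ge m-i+1$ shows it equals precisely $t_{ij}$, so the recurrence reads $\mathbf f_n=T\,\mathbf f_{n-1}$.

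From here the assembly is routine. The base values $f^{\,i}_{m,1}=1$ give $\mathbf f_1=\mathbf 1$, so an immediate induction yields $\mathbf f_k=T^{\,k-1}\mathbf 1$; pairing with $\mathbf b$ gives $|\mathcal C_{m,k}|=\mathbf b\,T^{\,k-1}\mathbf 1=\begin{bmatrix}\binom{m}{1}&\cdots&\binom{m}{m}\end{bmatrix}T^{\,k-1}\mathbf 1$, which is exactly the stated formula for $|\mathcal C_{m,k}|$. Substituting this into the interval decomposition completes the proof of the displayed expression for $N(K_m\times P_n)$.

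The only genuinely combinatorial content, and thus the step I would treat most carefully (it is the part established in \cite{Ma}), is the recurrence (\ref{eqfmn}) itself; if I were to prove the theorem from scratch I would reconstruct it as follows. Fixing an $i$-set $F$ of rows in the last column $I_n$ and deleting that column, I would first record the crucial feature of $K_m\times P_n$ that every column induces a clique: this forces a set meeting all of columns $1,\dots,n-1$ to be connected exactly when consecutive columns share a common row. Using this, one checks that $F\cup C''$ is connected (with last column equal to $F$) if and only if $C''\in\mathcal C_{m,n-1}$ is connected and its $(n-1)$-st column meets the row-set of $F$, since the clique $F$ can only bridge into column $n-1$. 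Complementary counting over the $m-i$ rows avoiding $F$ then turns ``meets $F$'' into $|\mathcal C_{m,n-1}|-\sum_{j=1}^{m-i}\binom{m-i}{j}f^{\,j}_{m,n-1}$, which is (\ref{eqfmn}). The main obstacle is therefore not the linear algebra but verifying this connectivity-through-the-clique dichotomy and the complementary count; once they are in hand, the transfer-matrix packaging above is purely formal.
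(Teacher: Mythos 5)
Your proposal is correct and follows essentially the same route as the paper: it packages the recurrence (\ref{eqfmn}) together with the partition identity $|\mathcal{C}_{m,k}|=\sum_{i}\binom{m}{i}f_{m,k}^{i}$ into the iteration $\mathbf{f}_k=T\,\mathbf{f}_{k-1}$ with $\mathbf{f}_1=\mathbf{1}$, exactly as the paper's preceding discussion intends, and your coefficient matching against the two cases $j\le m-i$ and $j\ge m-i+1$ recovers $T$ precisely. Your supplementary reconstruction of (\ref{eqfmn}) itself --- using that each column induces a clique, so a disconnected $C''$ would have a component missing column $n-1$ and hence unable to reach $F$, followed by complementary counting over the $m-i$ rows avoiding $F$ --- is also sound and matches the argument the paper imports from \cite{Ma}.
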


By Theorem~\ref{thm11}, we shall now focus on the set \( \mathcal{C}(K_{m} \times P_{n}) \setminus \mathcal{C}(C_{m} \times P_{n}) \).
Let \( \mathcal{C}_{m,n}' \) be the subset of \( \mathcal{C}(K_{m} \times P_{n}) \setminus \mathcal{C}(C_{m} \times P_{n}) \), where each element of \( \mathcal{C}_{m,n}' \) contains at least one vertex from every column of \( K_{m} \times P_{n} \). Clearly, $\mathcal C_{m,n}'\subseteq \mathcal C_{m,n}$.
For any non-empty set $X\subseteq V(K_{m}\times P_{n})$, let
\begin{eqnarray}
	N(K_{m}\times P_{n};X)&=&|\{C\in \mathcal{C}_{m,n}: C\cap I_i=X\cap I_i~\text{whenever}~X\cap I_i\neq\emptyset\}|,\nonumber\\
	N'(K_{m}\times P_{n};X)&=&|\{C\in \mathcal{C}_{m,n}': C\cap I_i=X\cap I_i~\text{whenever}~X\cap I_i\neq\emptyset\}|.\nonumber
\end{eqnarray}
Then $N(K_{m}\times P_{n};X)=f_{m,n}^{|X|}$ when $X\subseteq I_{n}$, and
the following lemma is easy to observe.
\begin{lemma}\label{lem1}
For any positive integers $m$ and $n$,
\begin{equation}\label{equ2.1}
|\mathcal{C}_{m,n}'|=\sum_{\emptyset\neq S_n\subseteq I_n}N'(K_{m}\times P_{n};S_n),
\end{equation}
where
\begin{equation}\label{equ2.2}
 N'(K_{m}\times P_{n};S_n)=\sum_{\emptyset\neq X\subseteq I_{n-1} \atop X\cap S_{n-1}\neq \emptyset
 }N'(K_{m}\times P_{n};S_n\cup X)~\text{when}~n\ge 2.
 \end{equation}
 \end{lemma}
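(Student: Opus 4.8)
The plan is to prove both identities by direct partition (double-counting) arguments; the only substantive point is a connectivity observation special to the product $K_m\times P_n$, everything else being bookkeeping.

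For (\ref{equ2.1}), I would first note that since $S_n\subseteq I_n$, the set $S_n$ meets only the $n$-th column, so the defining condition ``$C\cap I_i=S_n\cap I_i$ whenever $S_n\cap I_i\neq\emptyset$'' collapses to the single requirement $C\cap I_n=S_n$. Hence $N'(K_m\times P_n;S_n)$ counts exactly those $C\in\mathcal{C}_{m,n}'$ with $C\cap I_n=S_n$. Because every member of $\mathcal{C}_{m,n}'$ contains at least one vertex of $I_n$, the assignment $C\mapsto C\cap I_n$ sorts $\mathcal{C}_{m,n}'$ into disjoint classes indexed by the nonempty subsets $S_n\subseteq I_n$; summing the class sizes yields (\ref{equ2.1}).

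For (\ref{equ2.2}), I would fix a nonempty $S_n\subseteq I_n$ and refine the class $\{C\in\mathcal{C}_{m,n}':C\cap I_n=S_n\}$ according to the value $X:=C\cap I_{n-1}$. Since $C$ meets every column and $n\ge 2$, we have $X\neq\emptyset$. The key step is to identify precisely which $X$ can arise, and here I would exploit the structure of $K_m\times P_n$: between consecutive columns the only edges are the same-row edges $v_{i,n-1}v_{i,n}$, so $I_{n-1}$ separates $I_n$ from all earlier columns. As $S_n$ lies entirely in $I_n$ while $C$ also has vertices in columns $1,\dots,n-1$, connectivity of $(K_m\times P_n)[C]$ forces at least one same-row edge joining $S_n$ to $C\setminus S_n$; that edge lands in $X$, which means $X\cap S_{n-1}\neq\emptyset$, recalling that $S_{n-1}$ is exactly the same-row copy of $S_n$ in $I_{n-1}$. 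Conversely, for any admissible $X$ the set $S_n\cup X$ meets only columns $n-1$ and $n$, so $N'(K_m\times P_n;S_n\cup X)$ counts exactly the $C\in\mathcal{C}_{m,n}'$ with $C\cap I_n=S_n$ and $C\cap I_{n-1}=X$. Summing over all admissible $X$ partitions the class and gives (\ref{equ2.2}).

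The main thing to verify carefully, and really the only non-bookkeeping step, is that $X\cap S_{n-1}\neq\emptyset$ characterizes exactly the values of $C\cap I_{n-1}$ that occur: necessity comes from the separating role of $I_{n-1}$ together with connectivity in $K_m\times P_n$, while sufficiency and disjointness follow because $C\cap I_{n-1}$ is a function of $C$ (so distinct $X$ give disjoint classes) and because the extra membership requirement $C\notin\mathcal{C}(C_m\times P_n)$ is simply inherited by each $C$ and plays no role in the decomposition. I would also record that the hypothesis $n\ge 2$ is needed so that earlier columns exist and the projection $S_{n-1}$ is defined.
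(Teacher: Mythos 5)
Your proof is correct and is exactly the argument the paper has in mind: the paper states this lemma without proof (``easy to observe''), and the intended justification is precisely your two-level partition --- first sorting $\mathcal{C}_{m,n}'$ by $C\cap I_n$, then refining by $C\cap I_{n-1}$, with connectivity in $K_m\times P_n$ forcing $C\cap I_{n-1}\cap S_{n-1}\neq\emptyset$ because all edges leaving $I_n$ are same-row edges into $I_{n-1}$. Your explicit verification of which $X$ actually occur, and the remark that the constraint $C\notin\mathcal{C}(C_m\times P_n)$ is inert in the decomposition, fill in the details the paper leaves unstated; nothing is missing.
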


By (\ref{equ2.1}) and (\ref{equ2.2}), we are able to calculate the value of $|\mathcal{C}_{m,n}'|$ when $n=1,2$. For the case $n\ge 3$, we shall deal with
the following recursive lemma, which can be derived in a manner parallel to Lemma 3.2 in \cite{Ma} and the proof is thus omitted. Here we denote by $k(G)$  the number of components in $G$ for any graph $G$.
   \begin{lemma}\label{lem2}
  	For any $n\ge 3$ and $C \in \mathcal C_{m,n}'$ with $C\cap I_{n}= S_{n}$ and $C\cap I_{n-1}= T_{n-1}$,
  	\begin{enumerate}
  		\item[(1)] if some component of $C_{m}\times P_{n}[S_{n}]$ is also a component of $C_{m}\times P_{n}[S_{n}\cup T_{n-1}]$,
  		then
  		$N'(K_{m}\times P_{n}; S_{n}\cup T_{n-1})=N(K_{m}\times P_{n-1}; T_{n-1} )=f_{m,n-1}^{|T_{n-1}|}$;
  		\item[(2)] if each component of $C_{m}\times P_{n}[S_{n}]$ is not a component of $C_{m}\times P_{n}[S_{n}\cup T_{n-1}]$ and $k(C_{m}\times P_{n}[S_{n}\cup T_{n-1}])=k(C_{m}\times P_{n-1}[ T_{n-1}])$, then $N'(K_{m}\times P_{n}; S_{n}\cup T_{n-1} )=N'(K_{m}\times P_{n-1}; T_{n-1} )$;
  		\item[(3)] for the remaining case that each component of $C_{m}\times P_{n}[S_{n}]$ is not a component of $C_{m}\times P_{n}[S_{n}\cup T_{n-1}]$ and
  		$k(C_{m}\times P_{n}[S_{n}\cup T_{n-1}])< k(C_{m}\times P_{n-1}[ T_{n-1}])$, we have
  		\begin{equation}\label{equ2.3}
  			N'(K_{m}\times P_{n}; S_{n}\cup T_{n-1} )= \sum_{\emptyset\neq X\subseteq I_{n-2} \atop X\cap T_{n-2}\neq \emptyset
  			}N'(K_{m}\times P_{n}; S_{n}\cup T_{n-1} \cup X).
  		\end{equation}
  		Moreover,
  			\begin{enumerate}
  			\item[(i)] if some component of $C_{m}\times P_{n}[S_{n}\cup T_{n-1}]$ is also a component of $C_{m}\times P_{n}[S_{n}\cup T_{n-1}\cup X]$, then $N'(K_{m}\times P_{n}; S_{n}\cup T_{n-1}\cup X)=N(K_{m}\times P_{n-2};X)=f_{m,n-2}^{|X|}$;
  			\item[(ii)] if each component of $C_{m}\times P_{n}[S_{n}\cup T_{n-1}]$ is not a component of $C_{m}\times P_{n}[S_{n}\cup T_{n-1}\cup X]$ and $k(C_{m}\times P_{n}[S_{n}\cup T_{n-1}\cup X])=k(C_{m}\times P_{n-2}[X])$, then $N'(K_{m}\times P_{n}; S_{n}\cup T_{n-1}\cup X )=N'(K_{m}\times P_{n-2}; X)$;
  			\item[(iii)]
otherwise, each component of \( C_m \times P_n[S_n \cup T_{n-1}] \) is not a component of \( C_m \times P_n[S_n \cup T_{n-1} \cup X] \) and \( k(C_m \times P_n[S_n \cup T_{n-1} \cup X]) < k(C_m \times P_{n-2}[X]) \).
By the symmetry of $C_m$, assume that \( R_1, R_2, \dots, R_r \) are the components of \( C_m \times P_{n-2}[X] \), where \( k_s < k_t \) when \( s < t \) for all vertices \( v_{k_s, n-2} \) in \( R_s \) and \( v_{k_t, n-2} \) in \( R_t \). For each $i=1,2,\dots,r$, let $\psi(i)=\{j: R_j~\text{lies in the same component as}~R_i~\text{in}~C_m \times P_n[S_n \cup T_{n-1} \cup X]\}$, and let \( l_i \) (or \( \overline{l}_i \)) be the smallest (or largest) integer \( q \) such that \( v_{q, n-2} \) belongs to the component of \( C_m \times P_n[S_n \cup T_{n-1} \cup X] \) containing \( R_i \). If $\psi(i)$ is a set of consecutive numbers for each $i=1,2,\dots,r$, then $N'(K_{m}\times P_{n}; S_{n}\cup T_{n-1}\cup X)=N'(K_{m}\times P_{n-1}; X\cup \{ v_{q,n-1}: l_{i}\le q \le \overline{l}_{i}, i=1,\dots,r\})$.


  		\end{enumerate}

  	\end{enumerate}
  \end{lemma}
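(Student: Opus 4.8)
The plan is to follow the column-peeling strategy used for Lemma~3.2 in~\cite{Ma}, decomposing a set $C\in\mathcal C_{m,n}'$ from the $n$-th column backwards, but now tracking the component structure of the subgraph induced in $C_m\times P_n$ rather than in $P_m\times P_n$. The crucial preliminary observation is that connectivity in $K_m\times P_n$ and \emph{dis}connectivity in $C_m\times P_n$ decouple: since each column induces a complete graph in $K_m\times P_n$ and only vertical (matching) edges join consecutive columns, a set meeting every column is connected in $K_m\times P_n$ if and only if each pair of consecutive columns shares a common row, whereas the within-column \emph{row positions} are exactly what distinguishes $C_m\times P_n$ from $K_m\times P_n$. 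The whole argument therefore reduces to understanding \emph{where} and \emph{how} the non-connectivity of $C$ in $C_m\times P_n$ is forced, and to showing this information is carried by a bounded number of columns near the $n$-th.

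For part~(1), a component $D$ of $C_m\times P_n[S_n]$ that is also a component of $C_m\times P_n[S_n\cup T_{n-1}]$ has no $C_m$-edge to $S_n\setminus D$ (by maximality) and no vertical edge to $I_{n-1}$ (else it would meet $T_{n-1}$); since column $n$ borders only column $n-1$, $D$ remains a component of $C_m\times P_n[C]$ for \emph{every} admissible backward extension. Thus the $C_m$-disconnectivity is already forced, so $N'(K_m\times P_n;S_n\cup T_{n-1})$ counts all $K_m$-extensions, and because $S_n$ merely hangs off $T_{n-1}$ and contributes no further freedom this equals $N(K_m\times P_{n-1};T_{n-1})=f_{m,n-1}^{|T_{n-1}|}$. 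For part~(2), the hypotheses say adjoining $S_n$ to $T_{n-1}$ creates no isolated piece and fuses no two arcs of $C_m\times P_{n-1}[T_{n-1}]$; since $S_n$ can meet the rest of $C$ only through $T_{n-1}$, this forces $k(C_m\times P_n[C])=k(C_m\times P_{n-1}[C\setminus S_n])$ for every backward extension. The map $C\mapsto C\setminus S_n$ is then a bijection between the relevant slices of $\mathcal C_{m,n}'$ and $\mathcal C_{m,n-1}'$ preserving all three defining properties, yielding $N'(K_m\times P_n;S_n\cup T_{n-1})=N'(K_m\times P_{n-1};T_{n-1})$.

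For part~(3), the strict inequality $k(C_m\times P_n[S_n\cup T_{n-1}])<k(C_m\times P_{n-1}[T_{n-1}])$ means $S_n$ genuinely fuses arcs of $T_{n-1}$, so $S_n$ cannot be discarded and one exposes the next column by partitioning over $X=C\cap I_{n-2}$. Here I would first justify (\ref{equ2.3}): the admissible $X$ are exactly the nonempty subsets with $X\cap T_{n-2}\neq\emptyset$, since the only edges joining $\{S_n,T_{n-1}\}$ to columns $1,\dots,n-2$ are the vertical edges from $T_{n-1}$ to $X$, so $X\cap T_{n-2}=\emptyset$ would disconnect $C$ in $K_m\times P_n$. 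Sub-cases~(i) and~(ii) are then settled exactly as parts~(1) and~(2), one column further back: in~(i) a front component isolated from $X$ permanently witnesses the $C_m$-disconnectivity, giving $N(K_m\times P_{n-2};X)=f_{m,n-2}^{|X|}$; in~(ii) the front attaches to $X$ without fusing its arcs, so it can be deleted to give $N'(K_m\times P_{n-2};X)$.

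The hard part is sub-case~(iii), where the two-column front $S_n\cup T_{n-1}$ fuses arcs of $C_m\times P_{n-2}[X]$ and I wish to replace this front by a \emph{single} new last column, landing in the $(n-1)$-problem. The idea is that in $C_m$ a block of consecutive rows is a single arc, and placing the interval $\{v_{q,n-1}:l_i\le q\le\overline{l}_i\}$ in column $n-1$ fuses, through vertical edges, exactly those components $R_j$ of $X$ whose rows meet $[l_i,\overline{l}_i]$. Under the hypothesis that each $\psi(i)$ is a set of consecutive indices, I would show that $X\cup\{v_{q,n-1}:l_i\le q\le\overline{l}_i,\ i=1,\dots,r\}$ reproduces precisely the fusion pattern $\psi$ realised by the front, neither over- nor under-merging; consecutivity is exactly the condition guaranteeing that each fused family $\psi(i)$ occupies a genuine arc $[l_i,\overline{l}_i]$ rather than a disconnected union of arcs. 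Granting this, the component structure of $C$ in $C_m\times P_n$ restricted to columns $1,\dots,n-2$ coincides with that of the replacement set in $C_m\times P_{n-1}$, and since the back part (columns $1,\dots,n-3$) and the interface column $X$ are untouched, the map is a bijection preserving the three properties defining $\mathcal C'$, giving the stated identity. I expect the genuine difficulties to be entirely cyclic: fixing the cut of $C_m$ (via the ordering $k_s<k_t$ and the symmetry of $C_m$) so that every relevant family of arcs is a true interval, verifying that $[l_i,\overline{l}_i]$ does not wrap around the cycle, and confirming that distinct $\psi$-classes yield vertically disjoint intervals so that the simulating column creates no spurious fusion — none of which arise in the path case of~\cite{Ma}.
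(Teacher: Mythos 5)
Your proposal is correct and follows exactly the approach the paper intends: the paper omits its own proof of this lemma, stating only that it can be derived in a manner parallel to Lemma 3.2 of \cite{Ma}, and your column-peeling argument --- isolated front components permanently force $C_m$-disconnectivity (cases (1) and (3i)), fronts that attach without fusing can be deleted via the bijection $C\mapsto C\setminus S_n$ (cases (2) and (3ii)), and fusing fronts are simulated by a single replacement column reproducing the fusion pattern (case (3iii)) --- is precisely that argument, adapted to track components of the induced subgraphs in $C_m\times P_n$. The cyclic subtleties you flag in (3iii) are genuine but resolve exactly as you predict: consecutivity of each $\psi(i)$ together with the linear ordering $k_s<k_t$ of the $R_j$ guarantees each interval $[l_i,\overline{l}_i]$ covers all rows of its own class and no rows of any other, and any overlap, adjacency, or wrap-around contact between intervals of distinct classes would place vertices of two different components of $C_m\times P_{n-2}[X]$ in equal or cyclically adjacent rows of column $n-2$, a contradiction.
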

\noindent\textbf{Remark.} Note that for Lemma~\ref{lem2} (3), the only exceptional case is characterized by the following:
each component of \( C_{m} \times P_{n}[S_{n} \cup T_{n-1}] \) is not a component of
\( C_{m} \times P_{n}[S_{n} \cup T_{n-1} \cup X] \), $k(C_{m} \times P_{n}[S_{n} \cup T_{n-1} \cup X]) < k(C_{m} \times P_{n-2}[X]),
$
and there exist four components \( R_p, R_i, R_q, R_j \) in \( C_m \times P_{n-2}[X] \) with \( p < i < q < j\) such that $R_p, R_q$ are in the same component of \( C_{m} \times P_{n}[S_{n} \cup T_{n-1} \cup X] \) while $R_i, R_j$ are in other components.
An example is shown in Figure~\ref{fig3}.
For such cases, it is difficult to find a recurrence formula to calculate \( N'(K_m \times P_n; S_n \cup T_{n-1} \cup X) \).
However, for the case \( m \leq 7 \), we have $k(C_m \times P_{n-2}[X])\le 3$ for all $X\subseteq I_{n-2}$, which implies that the situation described above does not arise.
\begin{figure}[h]
\centering
\scalebox{0.7}[0.7]{\includegraphics{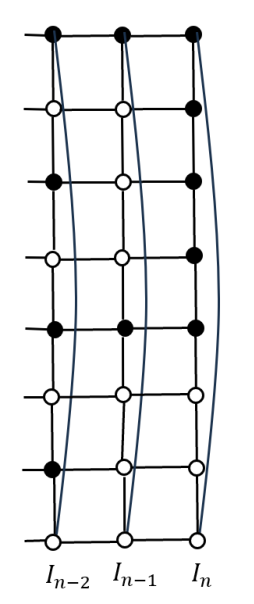}}
\caption{An example of the case that Lemma~\ref{lem2} can not deal with, where $C$ is the set of solid vertices}
\label{fig3}
\end{figure}

Now it is clear to see that Lemmas~\ref{lem1} and~\ref{lem2} together provide a lower bound of $|{\cal C}_{m,n}'|$ for all $m,n$, which can be calculated by multi-step recurrence formulas involving (\ref{eqfmn}).
Then by Theorem~\ref{thm11}, we obtain a sharp upper bound for $ N(C_{m}\times P_{n})$.
\begin{theorem}\label{main4}
For any positive integers $m$ and $n$,	
$$ N(C_{m}\times P_{n})\le \sum\limits_{k=1}^{n}(|{\cal C}_{m,k}|-|{\cal C}_{m,k}'|)\cdot(n-k+1),$$
where the equality holds when $m\le 7$.
\end{theorem}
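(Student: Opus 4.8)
The plan is to reduce the statement to a \emph{column-width} decomposition that mirrors exactly the identity $N(K_{m}\times P_{n})=\sum_{k=1}^{n}(n-k+1)|\mathcal C_{m,k}|$ underlying Theorem~\ref{thm11}. First I would observe that because the columns $I_{1},\dots,I_{n}$ are strung along the path $P_{n}$, every connected set of $C_{m}\times P_{n}$ meets a contiguous block of columns $I_{a},I_{a+1},\dots,I_{a+k-1}$: if it met $I_{a}$ and $I_{b}$ with $a<b$ but missed some intermediate $I_{c}$, connectivity would fail. Writing $\mathcal D_{m,k}$ for the connected sets of $C_{m}\times P_{k}$ meeting every column, translation invariance along $P_{n}$ shows each element of $\mathcal D_{m,k}$ yields exactly $n-k+1$ connected sets of $C_{m}\times P_{n}$ (one per admissible starting column), and every connected set arises this way. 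Hence $N(C_{m}\times P_{n})=\sum_{k=1}^{n}(n-k+1)\,|\mathcal D_{m,k}|$, the cylindrical analogue of the formula in Theorem~\ref{thm11}.

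Next I would pin down $|\mathcal D_{m,k}|$ via the identity $|\mathcal D_{m,k}|=|\mathcal C_{m,k}|-|\mathcal C_{m,k}'|$, which follows from a clean set partition. Any $S$ meeting every column and connected in $K_{m}\times P_{k}$ (that is, $S\in\mathcal C_{m,k}$) is either already connected in the cylinder $C_{m}\times P_{k}$, whence $S\in\mathcal D_{m,k}$, or connected in $K_{m}\times P_{k}$ but not in $C_{m}\times P_{k}$, whence $S\in\mathcal C_{m,k}'$ by definition. These alternatives are mutually exclusive and exhaustive, so $\mathcal C_{m,k}=\mathcal D_{m,k}\sqcup\mathcal C_{m,k}'$. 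Substituting gives the exact formula $N(C_{m}\times P_{n})=\sum_{k=1}^{n}(n-k+1)(|\mathcal C_{m,k}|-|\mathcal C_{m,k}'|)$, where $|\mathcal C_{m,k}|$ is delivered in closed recursive form by Theorem~\ref{thm11}.

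The inequality then stems entirely from how $|\mathcal C_{m,k}'|$ is actually evaluated. Lemmas~\ref{lem1} and~\ref{lem2} compute $|\mathcal C_{m,k}'|$ through the multi-step recurrence, but as the Remark notes, this recurrence fails to account for the exceptional nesting configuration of Lemma~\ref{lem2}(3)(iii), so in general it returns a quantity at most the true value of $|\mathcal C_{m,k}'|$. Since $|\mathcal C_{m,k}'|$ enters with a minus sign, replacing it by this computable lower bound can only increase each summand, yielding $N(C_{m}\times P_{n})\le\sum_{k=1}^{n}(n-k+1)(|\mathcal C_{m,k}|-|\mathcal C_{m,k}'|)$, as claimed.

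For equality I would verify that the exceptional configuration cannot occur when $m\le 7$. That pattern requires four components $R_{p},R_{i},R_{q},R_{j}$ with $p<i<q<j$ inside a single-column induced subgraph, i.e.\ at least four components in $C_{m}\times P_{k-2}[X]$ for some $X\subseteq I_{k-2}$. But $C_{m}\times P_{k-2}[X]$ is an induced subgraph of the cycle $C_{m}$, whose number of components is at most $\lfloor m/2\rfloor$; for $m\le 7$ this is at most $3$, ruling out the four-component crossing. Hence for $m\le 7$ the recurrence returns the exact $|\mathcal C_{m,k}'|$ and the bound collapses to equality. The hard part is precisely this last check: confirming that the component bound $k(C_{m}\times P_{k-2}[X])\le 3$ genuinely excludes \emph{every} instance of the nesting pattern of Lemma~\ref{lem2}(3)(iii), since this is the sole point where the exact counting identity and the computability of the right-hand side can part ways.
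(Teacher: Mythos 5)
Your proposal is correct and follows essentially the same route as the paper: the block-of-contiguous-columns decomposition (the cylindrical analogue of Theorem~\ref{thm11}), the exact partition $\mathcal{C}_{m,k}=\mathcal{D}_{m,k}\sqcup\mathcal{C}_{m,k}'$, the observation that the recurrences of Lemmas~\ref{lem1} and~\ref{lem2} only lower-bound $|\mathcal{C}_{m,k}'|$ in general (whence the inequality), and the component bound $k(C_m\times P_{k-2}[X])\le\lfloor m/2\rfloor\le 3$ for $m\le 7$ ruling out the exceptional crossing pattern (whence equality). In fact you make explicit the exact counting identity and the partition argument that the paper leaves implicit, but the underlying proof is the same.
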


\section{Calculation results on $N(C_4\times P_n)$ and $N(C_5\times P_n)$
\label{seccal}}
In this section, we shall present detailed calculation results for Theorem~\ref{main4} when $m=4,5$.

In what follows, we abbreviate $N(K_{m}\times P_{n}; \{x_{1},x_{2},...,x_{j}\})$ and $N'(K_{m}\times P_{n}; \mathcal \{x_{1},x_{2},...,x_{j}\})$ as
$N(K_{m}\times P_{n}; x_{1},x_{2},...,x_{j})$ and $N'(K_{m}\times P_{n}; x_{1},x_{2},...,x_{j})$, respectively.

We first focus on $N(C_4\times P_n)$.
For any positive integer $n$, assume that $v_{i+4,n}=v_{i,n}$ for all $i=1,2,3,4$, and let
\begin{itemize}
  \item $a_{n}=N'(K_{4}\times P_{n}; v_{i,n})$ for $i=1,2,3,4$;
  \item $b_{n}^{1}=N'(K_{4}\times P_{n}; v_{i,n}, v_{i+1,n})$ and
      $b_{n}^{2}=N'(K_{4}\times P_{n}; v_{i,n}, v_{i+2,n})$ for $i=1,2,3,4$;
  \item $c_{n}=N'(K_{4}\times P_{n}; v_{i,n}, v_{i+1,n}, v_{i+2,n})$ for $i=1,2,3,4$;
  \item $d_{n}=N'(K_{4}\times P_{n}; I_n)$;
\item $x_n=N'(K_{4}\times P_{n};I_n\cup\{v_{1,n-1},v_{3,n-1}\})$ when $n\ge 2$.
\end{itemize}

Then the following corollary is a special case of Theorem~\ref{main4}.

\renewcommand{\labelenumi}{\rm(\roman{enumi})}
\begin{corollary}\label{main1}
	For any positive integer $n$,
	$$ N(C_{4}\times P_{n})=\sum\limits_{k=1}^{n}(|\mathcal{C}_{4,k}|-|\mathcal{C}_{4,k}'|)\cdot(n-k+1),$$
where
\begin{enumerate}
\item
$$	|\mathcal{C}_{4,k}|=\begin{bmatrix}\binom{4} {1} & \binom{4} {2} & \binom{4} {3} & \binom{4} {4}\end{bmatrix} \cdot
\left[ \begin{matrix}
				1 &3 &3 &1 \\
				2 & 5 &4& 1 \\
				3 & 6 &4& 1 \\
				4 & 6 &4& 1
			\end{matrix} \right]^{k-1}\cdot \mathbf{1};
		$$
\item
	$$|\mathcal{C}_{4,k}'|=4 a_{k}+4 b_{k}^{1}+2b_{k}^{2}+4 c_{k}+d_{k},$$
with	$a_{1}=b_{1}^{1}=c_{1}=d_{1}=x_{2}=0$, $b_{1}^{2}=1$, and when $k\ge 2$,
	\begin{align}\left\{\begin{aligned}
		a_{k}&=a_{k-1}+2b_{k-1}^{1}+b_{k-1}^{2}+3c_{k-1}+d_{k-1},\\
			b_{k}^{1}&=2a_{k-1}+3b_{k-1}^{1}+2b_{k-1}^{2}+4c_{k-1}+d_{k-1},\\
			b_{k}^{2}&=2f_{4,k-1}^{1}+4f_{4,k-1}^{2}+b_{k-1}^{2}+2c_{k-1}+2f_{4,k-1}^{3}+d_{k-1},\\
		c_{k}&=3a_{k-1}+4b_{k-1}^{1}+b_{k-1}^{2}+x_{k}+4c_{k-1}+d_{k-1},\\
			d_{k}&= 4a_{k-1}+4b_{k-1}^{1}+2x_{k}+4c_{k-1}+d_{k-1},\\
		x_k&=2a_{k-2}+4b_{k-2}^{1}+x_{k-1}+4c_{k-2}+d_{k-2} ~\text{where}~k\ge 3.\nonumber
		\end{aligned}\right.
	\end{align}
	\end{enumerate}
\end{corollary}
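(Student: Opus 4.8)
The plan is to present Corollary~\ref{main1} as the explicit unpacking of Theorem~\ref{main4} at $m=4$, so that its two parts correspond to the two ingredients $|\mathcal C_{4,k}|$ and $|\mathcal C_{4,k}'|$ in that theorem. Part (i) needs no new idea: it is Theorem~\ref{thm11} specialized to $m=4$, and one merely verifies that the displayed $4\times4$ matrix coincides with $T=(t_{ij})$ there by evaluating $\binom{4}{j}-\binom{4-i}{j}$ for $j\le 4-i$ and $\binom{4}{j}$ otherwise, row by row. The first task in part (ii) is the decomposition $|\mathcal C_{4,k}'|=4a_k+4b_k^1+2b_k^2+4c_k+d_k$. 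This is immediate from Lemma~\ref{lem1}, equation~(\ref{equ2.1}), together with the observation that the dihedral symmetry of $C_4$ permutes the nonempty subsets $S_n\subseteq I_n$ while preserving each value $N'(K_4\times P_n;S_n)$; the fifteen nonempty subsets then fall into five orbits --- four singletons, four adjacent pairs, two diagonal pairs, four triples, and the full column --- whose common values are $a_n,b_n^1,b_n^2,c_n,d_n$.

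Next I would establish the base cases by comparing $K_4\times P_1=K_4$ with $C_4\times P_1=C_4$: a subset is connected in $K_4$ but disconnected in $C_4$ exactly when it is a diagonal pair, which gives $b_1^2=1$ and $a_1=b_1^1=c_1=d_1=0$, while a direct look at $I_2\cup\{v_{1,1},v_{3,1}\}$ (already connected in $C_4\times P_2$) gives $x_2=0$. To produce the recurrences I would fix one orbit representative $S_k$ at a time and expand $N'(K_4\times P_k;S_k)$ by Lemma~\ref{lem1}, equation~(\ref{equ2.2}), as a sum over the admissible previous-column sets $X$ with $X\cap S_{k-1}\neq\emptyset$, then evaluate each summand $N'(K_4\times P_k;S_k\cup X)$ through Lemma~\ref{lem2}. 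Terms in which a component of $S_k$ survives as a component of $S_k\cup X$ fall under case~(1) and collapse to the numbers $f_{4,k-1}^{|X|}$ of~(\ref{eqfmn}); this is the source of the $f$-terms in the recurrence for $b_k^2$. Terms preserving the component count fall under case~(2) and feed back the single-column states $a_{k-1},b_{k-1}^1,b_{k-1}^2,c_{k-1},d_{k-1}$, while the merging terms fall under case~(3).

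The key point is the role of the auxiliary variable $x_k$. The only configuration arising in case~(3) that does not reduce to a single-column state is the one in which the set in column $k$ bridges a \emph{diagonal} pair in column $k-1$; the interface that this presents to the lower columns depends only on the two bridged vertices, so by the equivalence underlying Lemma~\ref{lem2}(3)(iii) the triple-over-diagonal and the full-column-over-diagonal configurations are counted by the same number $x_k=N'(K_4\times P_k;I_k\cup\{v_{1,k-1},v_{3,k-1}\})$. Since only a triple reaches one diagonal pair and only the full column reaches both, $x_k$ enters the recurrence for $c_k$ with coefficient $1$ and that for $d_k$ with coefficient $2$, and enters none of $a_k,b_k^1,b_k^2$. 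Closing the system then requires a recurrence for $x_k$ itself, obtained by applying~(\ref{equ2.3}) once more to peel off column $k-2$ and classifying the resulting terms via sub-cases~(3)(i)--(iii); the self-referential part yields the $x_{k-1}$ summand and the reducible parts the level-$(k-2)$ states, giving $x_k=2a_{k-2}+4b_{k-2}^1+x_{k-1}+4c_{k-2}+d_{k-2}$ for $k\ge3$. I expect the main obstacle to be precisely this bookkeeping: for every representative one must enumerate all admissible $X$ and decide which clause of Lemma~\ref{lem2} applies and, within case~(3), whether each $\psi(i)$ is an interval so that the reduction of~(3)(iii) is legitimate. What makes this manageable at $m=4$ --- as flagged in the Remark following Lemma~\ref{lem2} --- is that $C_4[X]$ has at most two components, so the crossing four-component obstruction never occurs and the unique irreducible two-column state is exactly the one recorded by $x_k$.
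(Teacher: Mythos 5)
Your proposal is correct and follows essentially the same route as the paper's proof: part (i) is dispatched via Theorem~\ref{thm11}, the decomposition $|\mathcal{C}_{4,k}'|=4a_k+4b_k^1+2b_k^2+4c_k+d_k$ comes from (\ref{equ2.1}) and the symmetry of $C_4$, and each recurrence is obtained by expanding via (\ref{equ2.2}) and classifying the previous-column sets through the cases of Lemma~\ref{lem2}, with the bridged-diagonal configuration recorded by $x_k$ and its own recurrence derived by one further application of (\ref{equ2.3}). The only (welcome) difference is expository: you spell out the interface equivalence identifying the triple-over-diagonal count with $x_k=N'(K_4\times P_k; I_k\cup\{v_{1,k-1},v_{3,k-1}\})$, which the paper uses implicitly when it writes $x_k$ directly in the expansions of $c_k$ and $d_k$.
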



	


\begin{proof}
 By Theorems~\ref{thm11} and~\ref{main4},
we only need to verify (ii).
Then due to (\ref{equ2.1}), it suffices to verify
the formulas for $a_{k}, b_{k}^{1}, b_{k}^{2}, c_{k}, d_{k}$ along with their respective initial values. The case when $k=1$ is obvious. Assume $k\ge 2$ in the following.
	\par
		
By (\ref{equ2.2}) and Lemma~\ref{lem2} (2), we have
	\begin{eqnarray}
		a_{k}&=& N'(K_{4}\times P_{k}; v_{1,k})\nonumber\\
		&=&N'(K_{4}\times P_{k}; v_{1,k},v_{1,k-1})+ N'(K_{4}\times P_{k}; v_{1,k},v_{1,k-1}, v_{2,k-1})\nonumber\\
		&&+ N'(K_{4}\times P_{k}; v_{1,k},v_{1,k-1}, v_{4,k-1})+ N'(K_{4}\times P_{k}; v_{1,k},v_{1,k-1}, v_{3,k-1})\nonumber\\
		&&+ N'(K_{4}\times P_{k}; v_{1,k},v_{1,k-1}, v_{2,k-1},v_{3,k-1})+ N'(K_{4}\times P_{k}; v_{1,k},v_{1,k-1}, v_{2,k-1},v_{4,k-1})\nonumber\\
		&&N'(K_{4}\times P_{k}; v_{1,k},v_{1,k-1}, v_{3,k-1},v_{4,k-1})+ N'(K_{4}\times P_{k}; \{v_{1,k}\}\cup I_{k-1})\nonumber\\
		&=&N'(K_{4}\times P_{k-1}; v_{1,k-1})+ N'(K_{4}\times P_{k}; v_{1,k-1}, v_{2,k-1})\nonumber\\
		&&+ N'(K_{4}\times P_{k-1}; v_{1,k-1}, v_{4,k-1})+ N'(K_{4}\times P_{k-1};v_{1,k-1}, v_{3,k-1})\nonumber\\
		&&+ N'(K_{4}\times P_{k-1}; v_{1,k-1}, v_{2,k-1},v_{3,k-1})+ N'(K_{4}\times P_{k-1}; v_{1,k-1}, v_{2,k-1},v_{4,k-1})\nonumber\\
		&&N'(K_{4}\times P_{k-1}; v_{1,k-1}, v_{3,k-1},v_{4,k-1})+ N'(K_{4}\times P_{k-1}; I_{k-1})\nonumber\\
		&=&a_{k-1}+2b_{k-1}^{1}+b_{k-1}^{2}+3c_{k-1}+d_{k-1}.\nonumber
	\end{eqnarray}
	
	Similarly, we deduce that
	\begin{eqnarray}
		b_{k}^{1}&= &N'(K_{4}\times P_{k}; v_{1,k}, v_{2,k})\nonumber\\
		&=&N'(K_{4}\times P_{k-1}; v_{1,k-1})
		+N'(K_{4}\times P_{k-1}; v_{2,k-1})
		+ N'(K_{4}\times P_{k-1}; v_{1,k-1}, v_{2,k-1})\nonumber\\
		&&+ N'(K_{4}\times P_{k-1}; v_{1,k-1}, v_{4,k-1})+ N'(K_{4}\times P_{k-1}; v_{2,k-1}, v_{3,k-1})\nonumber\\
		&&+N'(K_{4}\times P_{k-1}; v_{1,k-1}, v_{3,k-1})+N'(K_{4}\times P_{k-1}; v_{2,k-1}, v_{4,k-1})\nonumber\\
		&&+N'(K_{4}\times P_{k-1}; v_{1,k-1},v_{2,k-1},v_{3,k-1})+N'(K_{4}\times P_{k-1}; v_{2,k-1},v_{3,k-1},v_{4,k-1})\nonumber\\
		&&+N'(K_{4}\times P_{k-1}; v_{1,k-1}, v_{3,k-1},v_{4,k-1})+N'(K_{4}\times P_{k-1}; v_{1,k-1},v_{4,k-1},v_{2,k-1})\nonumber\\
		&&+ N'(K_{4}\times P_{k-1}; I_{k-1})\nonumber\\
		&=&2a_{k-1}+3b_{k-1}^{1}+2b_{k-1}^{2}+4c_{k-1}+d_{k-1}.\nonumber
	\end{eqnarray}
		By (\ref{equ2.2}) and Lemma~\ref{lem2} (1), (2), we have
	\begin{eqnarray}
		b_{k}^{2}&=& N'(K_{4}\times P_{k}; v_{1,k}, v_{3,k} )\nonumber\\
		&=&N'(K_{4}\times P_{k}; v_{1,k}, v_{3,k} , v_{1,k-1})+ N'(K_{4}\times P_{k}; v_{1,k}, v_{3,k} , v_{3,k-1})\nonumber\\
		&&+N'(K_{4}\times P_{k}; v_{1,k}, v_{3,k} , v_{1,k-1}, v_{2,k-1})
		+ N'(K_{4}\times P_{k}; v_{1,k}, v_{3,k}, v_{1,k-1}, v_{4,k-1})\nonumber\\
		&&+N'(K_{4}\times P_{k}; v_{1,k}, v_{3,k} , v_{2,k-1}, v_{3,k-1})+ N'(K_{4}\times P_{k}; v_{1,k}, v_{3,k}, v_{3,k-1}, v_{4,k-1})\nonumber\\
		&&+N' (K_{4}\times P_{k}; v_{1,k}, v_{3,k}, v_{1,k-1}, v_{3,k-1})+N'(K_{4}\times P_{k}; v_{1,k}, v_{3,k},v_{1,k-1},v_{2,k-1},v_{3,k-1})\nonumber\\
&&+N'(K_{4}\times P_{k}; v_{1,k}, v_{3,k}, v_{2,k-1},v_{3,k-1},v_{4,k-1})+N'(K_{4}\times P_{k}; v_{1,k}, v_{3,k}, v_{3,k-1},v_{4,k-1},v_{1,k-1})\nonumber\\
		&&+N'(K_{4}\times P_{k}; v_{1,k}, v_{3,k}, v_{4,k-1},v_{1,k-1},v_{2,k-1})+N'(K_{4}\times P_{k}; \{v_{1,k}, v_{3,k}\} \cup I_{k-1})\nonumber\\
		&=&N (K_{4}\times P_{k-1}; v_{1,k-1})+ N(K_{4}\times P_{k-1}; v_{3,k-1})\nonumber\\
		&&+N(K_{4}\times P_{k-1}; v_{1,k-1}, v_{2,k-1})+ N(K_{4}\times P_{k-1}; v_{1,k-1}, v_{4,k-1})\nonumber\\
	 &&+N(K_{4}\times P_{k-1}; v_{2,k-1}, v_{3,k-1})+ N(K_{4}\times P_{k-1}; v_{3,k-1}, v_{4,k-1})\nonumber\\
	 &&+N' (K_{4}\times P_{k-1}; v_{1,k-1}, v_{3,k-1})+N'(K_{4}\times P_{k-1}; v_{1,k-1},v_{2,k-1},v_{3,k-1})\nonumber\\
&&+N(K_{4}\times P_{k-1}; v_{2,k-1},v_{3,k-1},v_{4,k-1})+N'(K_{4}\times P_{k-1}; v_{3,k-1},v_{4,k-1},v_{1,k-1})\nonumber\\
		&&+N(K_{4}\times P_{k-1}; v_{4,k-1},v_{1,k-1},v_{2,k-1})+N'(K_{4}\times P_{k-1}; I_{k-1})\nonumber\\
		&=&2f_{4,k-1}^{1}+4f_{4,k-1}^{2}+b_{k-1}^{2}+2f_{4,k-1}^{3}+2c_{k-1}+d_{k-1},\nonumber
	\end{eqnarray}
where $f_{4,k-1}^{i}$ with $i=1,2,3$ can be calculated recursively by (\ref{eqfmn}).
	
 Next, by (\ref{equ2.2})
 and Lemma~\ref{lem2} (2), (3),
  we have
 	\begin{eqnarray}
 c_{k}&=&N'(K_{4}\times P_{k}; v_{1,k},v_{2,k}, v_{3,k})\nonumber\\
 &=&N'(K_{4}\times P_{k-1}; v_{1,k-1})+N'(K_{4}\times P_{k-1}; v_{2,k-1})+N'(K_{4}\times P_{k-1}; v_{3,k-1})\nonumber\\
 &&+N'(K_{4}\times P_{k-1};v_{1,k-1},v_{2,k-1})+x_n+N'(K_{4}\times P_{k-1}; v_{1,k-1},v_{4,k-1})\nonumber\\
 &&+N'(K_{4}\times P_{k-1}; v_{2,k-1},v_{3,k-1})+N'(K_{4}\times P_{k-1}; v_{3,k-1},v_{4,k-1})\nonumber\\
 &&+N'(K_{4}\times P_{k-1}; v_{2,k-1},v_{4,k-1})+N'(K_{4}\times P_{k-1}; v_{1,k-1},v_{2,k-1},v_{3,k-1})\nonumber\\
 &&+N'(K_{4}\times P_{k-1}; v_{1,k-1},v_{2,k-1},v_{4,k-1})+N'(K_{4}\times P_{k-1}; v_{2,k-1},v_{3,k-1},v_{4,k-1})\nonumber\\
 &&+
 N'(K_{4}\times P_{k-1}; v_{1,k-1},v_{3,k-1},v_{4,k-1})+N'(K_{4}\times P_{k-1}; I_{k-1})\nonumber\\
 &=&3a_{k-1}+4b_{k-1}^{1}+b_{k-1}^{2}+x_{k}+4c_{k-1}+d_{k-1},\nonumber
 \end{eqnarray}
 and
 \begin{eqnarray}
 d_{k}&=&N'(K_{4}\times P_{k}; I_{k})=4a_{k-1}+4b_{k-1}^{1}+2x_{k}+4c_{k-1}+d_{k-1}.\nonumber
 \end{eqnarray}

Then it remains to verify the recurrence formula for $x_k$. Obviously, $x_2=0$. For $k\ge 3$, by
 Lemma~\ref{lem2} (3), we have
	\begin{eqnarray}
		x_{k}&=&N'(K_{4}\times P_{k};I_k\cup\{v_{1,k-1},v_{3,k-1}\})\nonumber\\
		&=&N'(K_{4}\times P_{k};I_k\cup\{v_{1,k-1},v_{3,k-1},v_{1,k-2}\})+N'(K_{4}\times P_{k};I_k\cup\{v_{1,k-1},v_{3,k-1},v_{3,k-2}\})\nonumber\\
		&&+N'(K_{4}\times P_{k};I_k\cup\{v_{1,k-1},v_{3,k-1}, v_{1,k-2},v_{2,k-2}\})\nonumber\\
		&&+N'(K_{4}\times P_{k};I_k\cup\{v_{1,k-1},v_{3,k-1},v_{1,k-2}, v_{4,k-2}\})\nonumber\\
		&&+N'(K_{4}\times P_{k};I_k\cup\{v_{1,k-1},v_{3,k-1},v_{2,k-2},v_{3,k-2}\})\nonumber\\
		&&+N'(K_{4}\times P_{k};I_k\cup\{v_{1,k-1},v_{3,k-1},v_{3,k-2},v_{4,k-2}\})\nonumber\\
		&&+N'(K_{4}\times P_{k};I_k\cup\{v_{1,k-1},v_{3,k-1}, v_{1,k-2},v_{3,k-2}\})\nonumber\\
		&&+N'(K_{4}\times P_{k};I_k\cup\{v_{1,k-1},v_{3,k-1}, v_{1,k-2},v_{2,k-2},v_{3,k-2}\})\nonumber\\
		&&+N'(K_{4}\times P_{k};I_k\cup\{v_{1,k-1},v_{3,k-1}, v_{1,k-2},v_{2,k-2},v_{4,k-2}\})\nonumber\\
		&&+N'(K_{4}\times P_{k};I_k\cup\{v_{1,k-1},v_{3,k-1}, v_{1,k-2},v_{3,k-2},v_{4,k-2}\})\nonumber\\
		&&+N'(K_{4}\times P_{k};I_k\cup\{v_{1,k-1},v_{3,k-1}, v_{2,k-2},v_{3,k-2},v_{4,k-2}\})\nonumber\\
		&&+N'(K_{4}\times P_{k};I_k\cup\{v_{1,k-1},v_{3,k-1}\}\cup I_{k-2}),\nonumber
	\end{eqnarray}
where
	\begin{eqnarray}
		&&N'(K_{4}\times P_{k}; I_{k}\cup \{v_{1,k-1},v_{3,k-1},v_{1,k-2},v_{3,k-2}\})\nonumber\\
		&=&N'(K_{4}\times P_{k-1}; I_{k-1}\cup \{v_{1,k-2},v_{3,k-2}\})\nonumber\\
		&=&x_{k-1}.\nonumber
	\end{eqnarray}
Thus,
	\begin{eqnarray}
		x_k=2a_{k-2}+4b_{k-2}^{1}+x_{k-1}+4c_{k-2}+d_{k-2}. \nonumber
	\end{eqnarray}

	The proof is complete.
\end{proof}

Similarly, we establish the recurrence formulas for $N(C_5\times P_n)$. For any positive integer $n$, assume that $v_{i+5,n}=v_{i,n}$ for all $i=1,2,3,4,5$, and let
\begin{itemize}
  \item $a_{n}=N'(K_{5}\times P_{n}; v_{i,n})$ for $i=1,2,3,4,5$;
  \item $b_{n}^{1}=N'(K_{5}\times P_{n}; v_{i,n}, v_{i+1,n})$ and $b_{n}^{2}=N'(K_{5}\times P_{n}; v_{i,n}, v_{i+2,n})$ for $i=1,2,3,4,5$;
  \item $c_{n}^{1}=N'(K_{5}\times P_{n}; v_{i,n}, v_{i+1,n}, v_{i+2,n})$ and $c_{n}^{2}=N'(K_{5}\times P_{n}; v_{i,n}, v_{i+2,n}, v_{i+3,n})$ for $i=1,2,3,4,5$;
   \item $d_{n}=N'(K_{5}\times P_{n}; v_{i,n}, v_{i+1,n}, v_{i+2,n}, v_{i+3,n})$ for $i=1,2,3,4,5$;
  \item $g_{n}=N'(K_{5}\times P_{n}; I_n)$;
  \item \( x_n=N'(K_{5}\times P_{n}; v_{1,n}, v_{2,n},v_{3,n}, v_{1,n-1}, v_{3,n-1})\) when $n\ge 2$;
  \item
 \(
   y_n=N'(K_{5}\times P_{n}; v_{1,n}, v_{2,n},v_{3,n}, v_{1,n-1}, v_{3,n-1}, v_{4,n-1}) \\
     \hspace{-8cm}= N'(K_{5}\times P_{n}; v_{1,n}, v_{2,n},v_{3,n}, v_{1,n-1}, v_{3,n-1}, v_{5,n-1}) \) when $n\ge 2$.

\end{itemize}

Then the next corollary is direct.
\begin{corollary}\label{main2}
	For any positive integer $n$,
	$$ N(C_{5}\times P_{n})=\sum\limits_{k=1}^{n}(|\mathcal{C}_{5,k}|-|\mathcal{C}_{5,k}'|)\cdot(n-k+1),$$
	where
	\begin{enumerate}
	\item
	$$	|\mathcal{C}_{5,k}|=\begin{bmatrix}\binom{5} {1} & \binom{5} {2} & \binom{5} {3} & \binom{5} {4} & \binom{5} {5}\end{bmatrix} \cdot \left[ \begin{matrix}
				1 &4 &6 &4 &1 \\
				2 &7 &9 &5 &1 \\
				3 &9 &10 &5 &1 \\
				4 &10 &10 &5 &1 \\
				5 &10 &10 &5 &1
			\end{matrix} \right]^{k-1}\cdot\mathbf{1};$$
	\item
		$$|\mathcal{C}_{5,k}'|=5( a_{k}+ b_{k}^{1}+b_{k}^{2}+c_{k}^{1}+c_{k}^{2}+d_{k})+g_{k},$$
	with	$a_{1}=b_{1}^{1}=c_{1}^{1}=d_{1}=g_{1}= x_{2}= y_{2}=0, b_{1}^{2}=c_{1}^{2}=1$, and when $k\ge 2$,
	\begin{align}\left\{\begin{aligned}
			a_{k}&=a_{k-1}+2b_{k-1}^{1}+2b_{k-1}^{2}+3c_{k-1}^{1}+3c_{k-1}^{2}+4d_{k-1}+g_{k-1},\\
			b_{k}^{1}&=2a_{k-1}+3b_{k-1}^{1}+4b_{k-1}^{2}+4c_{k-1}^{1}+5c_{k-1}^{2}+5d_{k-1}+g_{k-1},\\
			b_{k}^{2}&=2f_{5,k-1}^{1}+6f_{5,k-1}^{2}+b_{k-1}^{2}+6f_{5,k-1}^{3}+c_{k-1}^{1}+2c_{k-1}^{2}+2f_{5,k-1}^{4}+3d_{k-1}+g_{k-1},\\
			c_{k}^{1}&=3a_{k-1}+4b_{k-1}^{1}+x_{k}+4b_{k-1}^{2}+5c_{k-1}^{1}+3c_{k-1}^{2}+2y_{k}+5d_{k-1}+g_{k-1},\\
			c_{k}^{2}&=3f_{5,k-1}^{1}+7f_{5,k-1}^{2}+2b_{k-1}^{2}+5f_{5,k-1}^{3}+2c_{k-1}^{1}+3c_{k-1}^{2}+f_{5,k-1}^{4}+4d_{k-1}+g_{k-1},\\
			d_{k}&=4a_{k-1}+5b_{k-1}^{1}+2b_{k-1}^{2}+3x_{k}+5c_{k-1}^{1}+4y_{k}+c_{k-1}^{2}+5d_{k-1}+g_{k-1},\\
			g_{k}&=5a_{k-1}+5b_{k-1}^{1}+5x_{k}+5y_{k}+5c_{k-1}^{1}+5d_{k-1}+g_{k-1},\\
			x_{k}&=2a_{k-2}+4b_{k-2}^{1}+x_{k-1}+2b_{k-2}^{2}+5c_{k-2}^{1}+2c_{k-2}^{2}+2y_{k-1}+5d_{k-2}+g_{k-2}~\text{where}~k\ge 3,\\
			y_{k}&=3a_{k-2}+5b_{k-2}^{1}+2x_{k-1}+2b_{k-2}^{2}+5c_{k-2}^{1}+2c_{k-2}^{2}+3y_{k-1}+5d_{k-2}+g_{k-2}~\text{where}~k\ge 3.\nonumber
		\end{aligned}\right.
	\end{align}
		\end{enumerate}
\end{corollary}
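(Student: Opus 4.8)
The plan is to mirror the argument already carried out for Corollary~\ref{main1}, now with the seven auxiliary sequences $a_k,b_k^1,b_k^2,c_k^1,c_k^2,d_k,g_k$ together with the two extra sequences $x_k,y_k$. First, since $m=5\le 7$, Theorem~\ref{main4} gives the \emph{equality} $N(C_5\times P_n)=\sum_{k=1}^{n}(|\mathcal{C}_{5,k}|-|\mathcal{C}_{5,k}'|)(n-k+1)$, so it suffices to evaluate $|\mathcal{C}_{5,k}|$ and $|\mathcal{C}_{5,k}'|$. Part~(i) is immediate by specializing Theorem~\ref{thm11} to $m=5$. For part~(ii), I would start from Lemma~\ref{lem1}, namely $|\mathcal{C}_{5,k}'|=\sum_{\emptyset\neq S_k\subseteq I_k}N'(K_5\times P_k;S_k)$, and exploit the cyclic symmetry of $C_5$ acting on the last column $I_k$. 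The $31$ non-empty subsets of $I_k$ split into seven rotation orbits: one orbit of singletons (giving $a_k$), two orbits of pairs according to whether the two vertices are adjacent ($b_k^1$) or at distance two ($b_k^2$) in $C_5$, two orbits of triples (three consecutive vertices giving $c_k^1$, and the type $\{v_i,v_{i+2},v_{i+3}\}$ giving $c_k^2$), one orbit of $4$-subsets ($d_k$), and the full column ($g_k$). Counting the orbit sizes $5,5,5,5,5,5,1$ yields exactly $|\mathcal{C}_{5,k}'|=5(a_k+b_k^1+b_k^2+c_k^1+c_k^2+d_k)+g_k$.

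The core of the proof is the derivation of the recurrences for $k\ge 2$. For each orbit representative $S_k$, I would apply the one-step expansion (\ref{equ2.2}), summing $N'(K_5\times P_k;S_k\cup X)$ over all non-empty $X\subseteq I_{k-1}$ meeting the shadow $S_{k-1}$, and then classify each summand through the three cases of Lemma~\ref{lem2}. A term in which some component of $C_5\times P_k[S_k]$ survives as a component of $C_5\times P_k[S_k\cup X]$ collapses, by Lemma~\ref{lem2}(1), to the known quantity $f_{5,k-1}^{|X|}$ (computable from (\ref{eqfmn})); a term preserving the component count reduces, by Lemma~\ref{lem2}(2), to $N'(K_5\times P_{k-1};X)$, which the symmetry of $C_5$ identifies with one of $a_{k-1},\dots,g_{k-1}$; and a merging term falls into Lemma~\ref{lem2}(3). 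Because any induced subgraph of $C_5$ has at most two components, the exceptional four-component configuration flagged in the Remark after Lemma~\ref{lem2} never occurs, so case~(3) is always resolvable. The merging terms that do not reduce to a previously defined sequence are precisely those forcing the auxiliary variables: the configuration $\{v_{1,k},v_{2,k},v_{3,k},v_{1,k-1},v_{3,k-1}\}$ defines $x_k$, and $\{v_{1,k},v_{2,k},v_{3,k},v_{1,k-1},v_{3,k-1},v_{4,k-1}\}$ defines $y_k$, the equality of its two listed forms following from the reflection of $C_5$ that fixes $v_2$ and swaps $v_1\leftrightarrow v_3$, $v_4\leftrightarrow v_5$.

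Finally, $x_k$ and $y_k$ satisfy coupled two-step recurrences (valid for $k\ge 3$): applying (\ref{equ2.2}) a second time to the $(k-2)$-th column and again invoking Lemma~\ref{lem2} reduces the relevant values of $N'(K_5\times P_k;\cdot)$ to index-$(k-2)$ sequences together with self-referential terms in $x_{k-1}$ and $y_{k-1}$. The base cases $a_1=b_1^1=c_1^1=d_1=g_1=0$, $b_1^2=c_1^2=1$, and $x_2=y_2=0$ are read off directly from the definitions (for instance, the defining set of $x_2$ is already connected in $C_5\times P_2$, so it contributes nothing to $\mathcal{C}_{5,2}'$). I expect the main obstacle to be purely the bookkeeping: for each of the seven representatives one must enumerate all admissible $X$, correctly diagnose which case of Lemma~\ref{lem2} applies, and fold the resulting terms onto the nine sequences without double-counting or omitting the case-(3) contributions. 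Verifying that the merging terms land exactly on $x_k$ and $y_k$ (and produce nothing new) is the delicate point, since it is here that the specific structure of $C_5$, rather than a larger cycle, is essential.
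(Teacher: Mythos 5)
Your proposal is correct and follows essentially the same route as the paper: equality from Theorem~\ref{main4} since $m=5\le 7$, part (i) from Theorem~\ref{thm11}, the orbit decomposition under the symmetry of $C_5$ giving $|\mathcal{C}_{5,k}'|=5(a_k+b_k^1+b_k^2+c_k^1+c_k^2+d_k)+g_k$, and the recurrences obtained by expanding via (\ref{equ2.2}) and classifying terms through Lemma~\ref{lem2}, with $x_k,y_k$ absorbing the case-(3) merging configurations (the exceptional case being excluded exactly as you argue, since induced subgraphs of $C_5$ have at most two components). This is precisely the computation the paper carries out for Corollary~\ref{main1} and declares "direct" for Corollary~\ref{main2}.
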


\section{Asymptotic analysis
\label{secmain2}}
In this section, we prove Theorem~\ref{main3} and perform an asymptotic analysis on $N(P_3\times P_n)$, $N(C_4\times P_n)$, $N(C_5\times P_n)$ and $N(K_{1,3}\times P_n)$.\par


\noindent\textit{Proof of Theorem~\ref{main3}.}
We begin by proving (\ref{equ1.2}). For each $k=1,2,\dots,n$, assume that  $V(P_k)=\{v_1,v_2,\dots,v_k\}$, where $v_iv_{i+1}\in E(P_k)$ for all $i\in \{1,2,\dots,k-1\}$. Then the $j$-th column $I_j$ of $G\times P_k$ is clearly $\{(x,v_j): x\in V(G)\}$ for all $j\in \{1,2,\dots,k\}$.
Moreover, let
$$\mathcal{S}_L(k)=\{C\in \mathcal{C}_L(G\times P_k): C \cap I_{j}\neq \emptyset~\text{for
each}~j =1,2,...,k\}.$$
Then it suffices to show that $\mathbf{1}^{T}\cdot A^{k-1}\cdot \mathbf{1}=|\mathcal{S}_L(k)|$ for all $k\ge 2$.\par


 In fact, we shall show by induction that for each $(i,j)$-entry in $A^{k-1}$, it is the number of sets in $\mathcal{S}_L(k)$ containing $\{(p,v_1): p\in \phi(i)\}\cup \{(q,v_k): q\in \phi(j)\} $. Then (\ref{equ1.2}) is proven by the definition of $\phi$.

 When $k=2$, the matrix $A$ clearly satisfies our requirements. Suppose the statement is true for $k=t$, where
$t\ge 2$. Let $l_{i,r}$ be any $(i,r)$-entry in $A^{t-1}$. Then $l_{i,r}$ is the number of sets in $\mathcal{S}_L(t)$ containing $\{(p,v_1): p\in \phi(i)\}\cup \{(q,v_t): q\in \phi(r)\} $. Now we consider the case for $k=t+1$. Each $(i,j)$-entry
in the matrix $A^{t}$ is $\sum\limits_{r=1}^{2^{m}-1} l_{i,r} a_{r,j}$, where $a_{r,j}=1$ if and only if
$\{(p,v_t): p\in \phi(r)\}\cup \{(q,v_{t+1}): q\in \phi(j)\}\in \mathcal{C}(G\times P_{t+1}).$
Consequently, $\sum\limits_{r=1}^{2^{m}-1} l_{i,r} a_{r,j}$ is the number of sets in $\mathcal{S}_L(t+1)$ containing $\{(p,v_1): p\in \phi(i)\}\cup \{(q,v_{t+1}): q\in \phi(j)\} $.
Hence the matrix $A^{t}$ satisfies the requirements in the statement and (\ref{equ1.2}) is proven.

Next, we show that (\ref{equ1.2}) is sharp only when $G$ is complete or $n \in \{1, 2\}$.
It is clear that the equality in (\ref{equ1.2}) holds when $G$ is complete or $n \in \{1, 2\}$.
Now, assume that the equality in (\ref{equ1.2}) holds for a connected graph $G$ that is not complete and $n \ge 3$. Then there exist non-adjacent vertices $u$ and $v$ in $G$ connected by a path $u x_1 x_2 \dots x_t v$ in $G$. Let
$$
C = \{(u, v_1), (v, v_1), (u, v_2), (v, v_2), (u, v_3), (x_1, v_3),(x_2, v_3), \dots, (x_t, v_3),(v, v_3)\}.
$$
Since $n\ge 3$, it can be easily verified that $C \in \mathcal{C}(G \times P_n)$ and $C \cap I_{1},C \cap I_{2}\neq \emptyset$. However,
$C\cap (I_{1}\cup I_{2}) \notin \mathcal{C}(G \times P_n) $, implying that $C\in \mathcal{C}(G \times P_n)\setminus \mathcal{C}_L(G \times P_n)$. Hence $N_L(G \times P_n) < N(G \times P_n)$, a contradiction.
As a result, (\ref{equ1.2}) is sharp only when $G$ is complete or $n \in \{1, 2\}$.
\hfill$\Box$

Note that the transfer matrix $A$ given in Theorem~\ref{main3} is symmetric and makes the calculation of an estimate of $N(G\times P_n)$ for any graph $G$ possible. In particular, this kind of estimate works well for (cylindrical) lattice graphs. See several examples below.

\begin{example}\end{example}
\begin{enumerate}
\item For $C_4\times P_n$, we have $A_{1}$ as the matrix $A$ in Theorem~\ref{main3} below:
\[
A_{1} = \begin{pmatrix}
1 & 0 & 0 & 0 & 1 & 0 & 0 & 1 & 0 & 0 & 1 & 0 & 1 & 1 & 1 \\
0 & 1 & 0 & 0 & 1 & 1 & 0 & 0 & 0 & 0 & 1 & 1 & 0 & 1 & 1 \\
0 & 0 & 1 & 0 & 0 & 1 & 1 & 0 & 0 & 0 & 1 & 1 & 1 & 0 & 1 \\
0 & 0 & 0 & 1 & 0 & 0 & 1 & 1 & 0 & 0 & 0 & 1 & 1 & 1 & 1 \\
1 & 1 & 0 & 0 & 1 & 1 & 0 & 1 & 0 & 0 & 1 & 1 & 1 & 1 & 1 \\
0 & 1 & 1 & 0 & 1 & 1 & 1 & 0 & 0 & 0 & 1 & 1 & 1 & 1 & 1 \\
0 & 0 & 1 & 1 & 0 & 1 & 1 & 1 & 0 & 0 & 1 & 1 & 1 & 1 & 1 \\
1 & 0 & 0 & 1 & 1 & 0 & 1 & 1 & 0 & 0 & 1 & 1 & 1 & 1 & 1 \\
0 & 0 & 0 & 0 & 0 & 0 & 0 & 0 & 0 & 0 & 1 & 0 & 1 & 0 & 1 \\
0 & 0 & 0 & 0 & 0 & 0 & 0 & 0 & 0 & 0 & 0 & 1 & 0 & 1 & 1 \\
1 & 1 & 1 & 0 & 1 & 1 & 1 & 1 & 1 & 0 & 1 & 1 & 1 & 1 & 1 \\
0 & 1 & 1 & 1 & 1 & 1 & 1 & 1 & 0 & 1 & 1 & 1 & 1 & 1 & 1 \\
1 & 0 & 1 & 1 & 1 & 1 & 1 & 1 & 1 & 0 & 1 & 1 & 1 & 1 & 1 \\
1 & 1 & 0 & 1 & 1 & 1 & 1 & 1 & 0 & 1 & 1 & 1 & 1 & 1 & 1 \\
1 & 1 & 1 & 1 & 1 & 1 & 1 & 1 & 1 & 1 & 1 & 1 & 1 & 1 & 1
\end{pmatrix}.
\]

By the Gelfand theorem and the fact that \( A _{1}\) is symmetric with all non-negative entries, we have
\[\lim_{n \to \infty} \left( \mathbf{1}^T A_{1}^{n-1} \mathbf{1} \right)^{1/n}= \lambda_{ A_{1}},\]
where $\lambda_{ A_{1}}$ is the largest eigenvalue of \( A_{1} \).
Using Mathematica, we find that \( \lambda_{ A_{1}} \approx 10.5318 \). Hence for sufficiently large $n$,
\[
c(C_{4}\times P_{n}) = N(C_{4}\times P_{n}) ^{1/4n}>  \left( \mathbf{1}^T A_{1}^{n-1} \mathbf{1} \right)^{1/4n}=\lambda_{ A_{1}}^{1/4} \approx 1.8014.
\]

\item Similarly for $C_5\times P_n$, we have \( \lambda_{ A_{2}} \approx 18.2600\) with $A_2$ as the matrix $A$ in Theorem~\ref{main3}. Thus
$$c(C_{5}\times P_{n})=N(C_{5}\times P_{n}) ^{1/5n}> \lambda_{ A_{2}}^{1/5}\approx 1.7877.$$

\item
Moreover, \(c_{1}=(N_L(C_4\times P_n))^{1/4n}\) and \(c_{2}=(N_L(C_5\times P_n))^{1/5n}\) provide a good estimate of  \( c(C_{4}\times P_{n}) \)  and \( c(C_{5}\times P_{n}) \), respectively. See some data in Figure~\ref{fig100},  Table~\ref{table1} and Table~\ref{table2} as examples that illustrate
the asymptotic behavior of \( c(C_{4}\times P_{n}) \),  \( c(C_{5}\times P_{n}) \), \(c_{1}\) and \(c_{2}\).

\item 
Additionally, based on extensive Mathematica computations, we suspect that the limits of $c(C_{4}\times P_{n})$ and $c(C_{5}\times P_{n})$ exist as $n$ tends to infinity, with approximate values 1.8374 and 1.8670, respectively.

\vspace{0.5cm}

\begin{figure}[H]
\centering
\begin{subfigure}[b]{0.45\textwidth}
    \includegraphics[width=\textwidth]{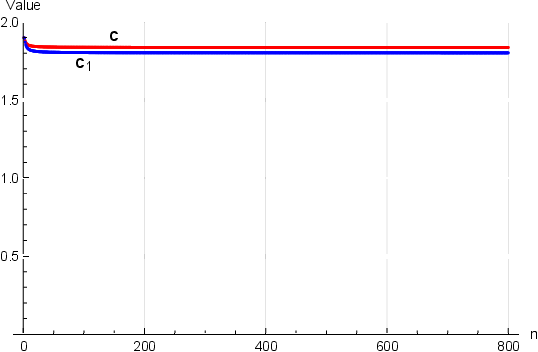}
    \caption{The comparison of $c(C_{4}\times P_{n})$ and $c_{1}$}
\end{subfigure}
\hspace{0.05\textwidth}
\begin{subfigure}[b]{0.45\textwidth}
    \includegraphics[width=\textwidth]{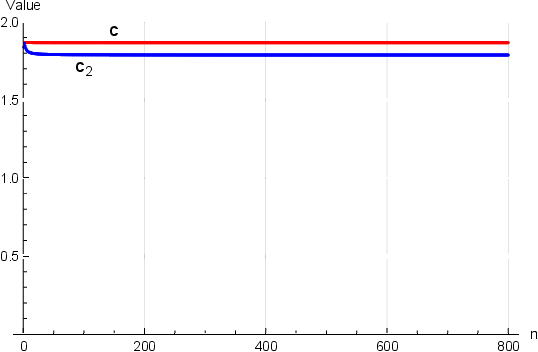}
    \caption{The comparison of $c(C_{5}\times P_{n})$ and $c_{2}$}
\end{subfigure}
\caption{Asymptotic behavior of \( c\), $c_{1}$ and $c_{2}$}
\label{fig100}
\end{figure}

\vspace{15pt}

\begin{table}[H]
\centering
\caption{The values of $c(C_{4}\times P_{n})$ and $c_{1}$ for $1 \le n \le 11$.}
\setlength{\tabcolsep}{2pt}
\renewcommand{\arraystretch}{1.15}
\begin{tabular}{@{}ccccccccccccc@{}}
\toprule
$n$ &1 & 2 & 3 & 4 & 5 & 6 & 7 & 8 & 9 & 10 &11\\
\midrule
$c(C_{4}\times P_{n})$ &
1.8988& 1.8960& 1.8796&1.8690&1.8624&1.8580&1.8548& 1.8525&
1.8506& 1.8492& 1.8480\\
$c_{1}$ &
1.8988& 1.8960& 1.8706& 1.8534&1.8430& 1.8360& 1.8310& 1.8273&
1.8244&1.8221& 1.8202\\
\bottomrule
\end{tabular}
\label{table1}
\end{table}
\vspace{15pt}
\begin{table}[H]
\centering
\caption{The values of $c(C_{5}\times P_{n})$ and $c_{2}$ for $1 \le n \le 11$.}
\setlength{\tabcolsep}{1pt}
\renewcommand{\arraystretch}{1.15}
\begin{tabular}{@{}ccccccccccccccccccccccccccccc@{}}
\toprule
$n$ &1 & 2 & 3 & 4 & 5 & 6 & 7 & 8 & 9 & 10 & 11 \\
\midrule
$c(C_{5}\times P_{n})$ &
1.8384& 1.8628& 1.8687& 1.8687& 1.8683& 1.8681& 1.8679& 1.8678&
1.8677& 1.8676& 1.8676\\
$c_{2}$ &1.8384& 1.8628& 1.8466& 1.8308& 1.8224& 1.8165& 1.8124& 1.8093&
1.8069&1.8049& 1.8034\\
\bottomrule
\end{tabular}
\label{table2}
\end{table}
\end{enumerate}

\vspace{1cm}
\begin{example}
\end{example}
\begin{enumerate}
\item For $P_3\times P_n$, we have $A_{3}$ as the matrix $A$ in Theorem~\ref{main3} below:
\[
A_{3} = \begin{pmatrix}
1 & 0 & 0 & 1 & 0 & 0 & 1 \\
0 & 1 & 0 & 1 & 1 & 0 & 1 \\
0 & 0 & 1 & 0 & 1 & 0 & 1 \\
1 & 1 & 0 & 1 & 1 & 0 & 1 \\
0 & 1 & 1 & 1 & 1 & 0 & 1 \\
0 & 0 & 0 & 0 & 0 & 0 & 1 \\
1 & 1 & 1 & 1 & 1 & 1 & 1
\end{pmatrix}.
\]
By using Mathematica, \( \lambda_{ A_{3}} \approx 4.6524\), and
\[
c(P_{3}\times P_{n}) = N(P_{3}\times P_{n})^{1/3n}> \lambda_{A_{3}}^{1/3} \approx 1.6694.
\]
\item For $K_{1,3}\times P_n$, we have \( \lambda_{ A_{4}} \approx 8.9322\) with $A_4$ as the matrix $A$ in Theorem~\ref{main3}. Thus,
\[
c(K_{1,3}\times P_{n}) = N(K_{1,3}\times P_{n})^{1/4n} > \lambda_{A_{4}}^{1/4} \approx 1.7288.
\]
\item
Moreover, \(c_{3}=(N_L(P_3\times P_n))^{1/3n}\) and $c_4=(N_L(K_{1,3}\times P_n))^{1/4n}$ provide a good estimate of \( c(P_{3}\times P_{n}) \) and  \( c(K_{1,3}\times P_{n}) \), respectively. See some data in Figure~\ref{fig200},  Table~\ref{table3} and Table~\ref{table4} as examples that illustrate
the asymptotic behavior of \( c(P_{3}\times P_{n}) \), \( c(K_{1,3}\times P_{n}) \), \(c_{3}\) and $c_4$.

\item  Additionally, based on extensive Mathematica computations, we suspect that the limits of $c(P_{3}\times P_{n})$  and $c(K_{1,3}\times P_{n})$ exist as $n$ tends to infinity, with approximate values of 1.7196 and 1.7914, respectively.

\begin{figure}[H]
\centering
\begin{subfigure}[b]{0.45\textwidth}
    \includegraphics[width=\textwidth]{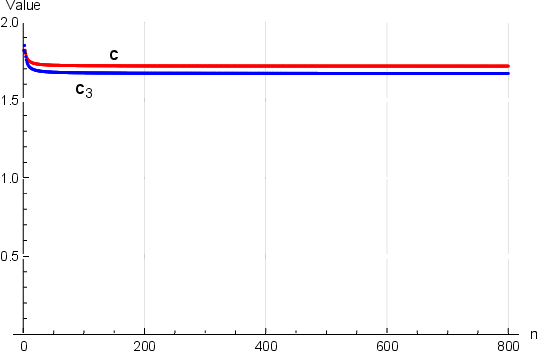}
    \caption{The comparison of $c(P_{3}\times P_{n})$ and $c_{3}$}
\end{subfigure}
\hspace{0.05\textwidth}
\begin{subfigure}[b]{0.45\textwidth}
    \includegraphics[width=\textwidth]{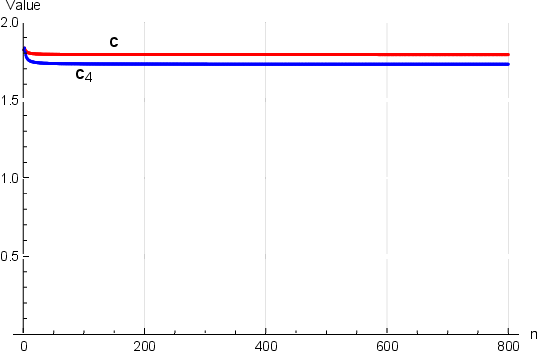}
    \caption{The comparison of $c(K_{1,3}\times P_{n})$ and $c_{4}$}
\end{subfigure}
\caption{Asymptotic behavior of \( c \), $c_{3}$ and \( c_{4}\)}
\label{fig200}
\end{figure}

\vspace{8pt}

\begin{table}[H]
\centering
\caption{The values of $c(P_{3}\times P_{n})$ and $c_{3}$ for $1\le n \le 11$.}
\setlength{\tabcolsep}{1pt}
\renewcommand{\arraystretch}{1.15}
\begin{tabular}{@{}ccccccccccccccccccccccccccccc@{}}
\toprule
$n$ &1& 2 & 3 & 4 & 5 & 6 & 7 & 8 & 9 & 10 & 11 \\
\midrule
$c(P_{3}\times P_{n})$ & 1.8171& 1.8493& 1.8190& 1.7960& 1.7804& 1.7697& 1.7620& 1.7563&
1.7518& 1.7482& 1.7453\\
$c_{3}$ &1.8171&1.8493& 1.8095& 1.7772& 1.7560& 1.7414& 1.7310& 1.7231&
1.7171& 1.7123& 1.7083\\
\bottomrule
\end{tabular}
\label{table3}
\end{table}

\vspace{8pt}
\begin{table}[htbp]
\centering
\caption{The values of $c(K_{1,3}\times P_{n})$ and $c_{4}$ for $1 \le n \le 11$.}
\setlength{\tabcolsep}{1pt}
\renewcommand{\arraystretch}{1.15}
\begin{tabular}{@{}ccccccccccccccccccccccccccccc@{}}
\toprule
$n$ &1 & 2 & 3 & 4 & 5 & 6 & 7 & 8 & 9 & 10 & 11 \\
\midrule
$c(K_{1,3}\times P_{n})$ &1.8212&1.8322&1.8224& 1.8148& 1.8098&1.8065& 1.8042& 1.8025&
1.8011& 1.8001& 1.7992\\
$c_{4}$ & 1.8212& 1.8322& 1.8075& 1.7875& 1.7757& 1.7677& 1.7621&1.7579&
1.7547&1.7521& 1.7499\\
\bottomrule
\end{tabular}
\label{table4}
\end{table}
\end{enumerate}

\section*{Acknowledgement}
\noindent
 This work was supported by National Natural Science Foundation of China (No. 12571379 and 12501498).

\section*{Statements and Declarations}
\begin{itemize}
	\item Competing Interests: The authors have no relevant financial or non-financial interests to disclose.
	\item Data availability: No data were used in this study.
\end{itemize}

\end{document}